\theoremstyle{definition} 
\theoremstyle{remark}
\newcommand{\scr}[1]{\mathscr #1} \definecolor{wco}{rgb}{0.5,0.2,0.3}
\newcommand{\be}{\begin{eqnarray}} \newcommand{\ee}{\end{eqnarray}}
\newcommand{\ce}{\begin{eqnarray*}} \newcommand{\de}{\end{eqnarray*}}
\newtheorem{theorem}{Theorem}[section]
\newtheorem{lemma}[theorem]{Lemma}
\newtheorem{remark}[theorem]{Remark}
\newtheorem{proposition}[theorem]{Proposition}
\newtheorem{corollary}[theorem]{Corollary}
 \def\DD{\Delta} 
 \def\HS{\text{\tiny\rm HS}}
  \def\e{\mathrm{e}} 
 \def\a{\alpha}  
 \def\[{{\Big[}} \def\]{{\Big]}}
\def\<{{\langle}} \def\>{{\rangle}} \def\({{\Big(}} \def\){{\Big)}}
 \def\E{\mathbb E}
\newcommand\1{\hbox{\kern.375em\vrule height1.57ex depth-.1ex
    width.05em\kern-.375em \rm 1}}
\def\sD{{\mathscr D}}
\def\geq{\geqslant} \def\leq{\leqslant} \def\ge{\geqslant}
\def\le{\leqslant}
\def\si{\sigma} \def\R{\mathbb R}\def\nn{\nabla}
\def\ff{\frac} \def\ss{\sqrt} \def\B{\mathscr B}
  \def\dd{\delta} \def\vv{\varepsilon}
\def\<{\langle} \def\>{\rangle} 
 \def\nn{\nabla} \def\pp{\partial} 
\def\d{\text{\rm{d}}}  \def\D{\scr D}
\def\si{\sigma}  \def\e{\text{\rm{e}}} 
\def\P{\mathbb P}  
   \def\ll{\lambda}
\def\Ric{{\operatorname{Ric}}} \def\Hess{{\operatorname{Hess}}}
\def\id{{\operatorname{id}}} \def\trace{\operatorname{tr}}
\def\cut{\operatorname{cut}}\def\O{\operatorname{\rm O}}
\def\dist{\operatorname{dist}}\def\I{\operatorname{\mathbb I}}
 \def\di{\displaystyle}
\def\f{\frac} \def\a{\alpha }  
   \def\n{\nabla }
  \def\rr{\rho} 
  \def\beq{\begin{equation}} \def\ff{\frac}\def\nn{\nabla}
    \def\<{\langle} \def\>{\rangle} \def\d{{\rm
        d}}\def\ss{\sqrt} \def\Eig{{\rm
        Eig}}
    \newcommand\newdot{{\kern.8pt\cdot\kern.8pt}}
    \newcommand\mequal{\overset{\rm m}{=}}
    \newcommand\mstrut{{\phantom{.}}}
    \newcommand\bull{{\!\hbox{\bf .}}}
    \title{\bf\Large  Gradient Estimates on Dirichlet and Neumann Eigenfunctions}
    \author[1]{\bf Marc Arnaudon}
    \author[2]{\bf Anton Thalmaier}
    \author[3]{\bf Feng-Yu Wang}
    \affil[1]{Institut de Math\'ematiques de Bordeaux, Universit\'e de Bordeaux,\par
      351 Cours de la Lib\'eration,  F--33405 Talence Cedex, France\par
      \texttt{marc.arnaudon@math.u-bordeaux.fr}\vspace{1em}}
    \affil[2]{Mathematics Research Unit, FSTC, University of Luxembourg, \par
      Maison du Nombre, L--4364 Esch-sur-Alzette, Grand Duchy of Luxembourg\par
      \texttt{anton.thalmaier@uni.lu}\vspace{1em}}
     \affil[3]{Center for Applied Mathematics, Tianjin University, Tianjin 300072, China, and\par
      Department of Mathematics, Swansea University, Singleton Park, SA2 8PP, United Kingdom\par 
\texttt{wangfy@tju.edu.cn}}
\begin{document}
\numberwithin{equation}{section}
\def\theequation{\arabic{section}.\arabic{equation}}

\maketitle

\newcommand\blfootnote[1]{%
  \begingroup
  \renewcommand\thefootnote{}\footnote{#1}%
  \addtocounter{footnote}{-1}%
  \endgroup
}

\blfootnote{This work was supported by Fonds National de la Recherche Luxembourg (Grant No. O14/7628746 GEOMREV) 
and the University of Luxembourg (Grant No. IRP R-AGR-0517-10/AGSDE.  
Feng-Yu Wang acknowledges support in part by NNSFC (11771326, 11431014).}

\begin{abstract} By methods of stochastic analysis on Riemannian
  manifolds, we derive explicit constants $c_1(D)$ and $c_2(D)$ for a
  $d$-dimensional compact Riemannian manifold $D$ with boundary such
  that
  $$c_1(D)\ss\ll \|\phi\|_\infty\le \|\nn \phi\|_\infty \le c_2(D) \ss\ll \|\phi\|_\infty$$ holds
  for any Dirichlet eigenfunction $\phi$ of $-\DD$ with eigenvalue
  $\ll$.  In particular, when $D$ is convex with non-negative Ricci
  curvature, the estimate holds for
  $$c_1(D)= \ff 1{d\e},\quad c_2(D)=\sqrt{\e}\left(\frac{\sqrt{2}}{\sqrt{\pi}}+\frac{\sqrt{\pi}}{4\sqrt{2}}\right).$$
  Corresponding two-sided gradient estimates for Neumann eigenfunctions
  are derived in the second part of the paper.
\end{abstract}

\bigskip\noindent
{\bf AMS subject Classification:}\  35P20, 60H30, 58J65  \\
\noindent {\bf Keywords:} Eigenfunction, gradient estimate,
diffusion process, curvature, second fundamental form.

\section{Introduction}

Let $D$ be a $d$-dimensional compact Riemannian manifold with boundary
$\pp D$. We write $(\phi,\ll)\in \Eig(\Delta)$ if $\phi$ is a
Dirichlet eigenfunction of $-\Delta$ in $D$ with eigenvalue
$\ll>0$. According to \cite{G1}, there exist two constants
$c_1(D), c_2(D)>0$ such that \begin{equation}\label{BB}c_1(D)\ss\ll
\|\phi\|_\infty\le \|\nabla \phi\|_\infty \le c_2(D)\ss\ll
\|\phi\|_\infty,\quad (\phi,\ll)\in \Eig(\Delta).\end{equation} An
analogous statement for Neumann eigenfunctions has been derived in
\cite{HSX}.

Concerning Dirichlet eigenfunctions, an explicit upper constant $c_2(D)$ 
can be derived from the uniform gradient  estimate of the Dirichlet semigroup 
in an earlier paper \cite{W04} of the third named author.
More precisely, let $K,\theta\ge 0$ be two constants such that
\begin{equation}\label{CV} \Ric_D\ge -K,\quad  H_{\pp D}\ge -\theta,\end{equation}
where $\Ric_D$ is the
Ricci curvature on $D$ and $H_{\pp D}$ the mean curvature  of $\pp D$. Let
\begin{equation}\label{AA0}\alpha_0= \ff 1 2 \max\big\{\theta, \ss{(d-1)K}\big\}.\end{equation}
Consider the semigroup $P_t=\e^{t\Delta}$ for the Dirichlet Laplacian $\Delta$. 
According to \cite[Theorem 1.1]{W04} where $c=2\alpha_0$, for any nontrivial $f\in \B_b(D)$ and $t>0$,
the following estimate holds:$$\ff{\|\nabla P_t f\|_\infty}{\|f\|_\infty} \le 9.5 \alpha_0 
+ \ff{2\ss{\alpha_0} (1+4^{2/3})^{1/4}\, (1+ 5\times 2^{-1/3})}{(t\pi)^{1/4}}
+ \ff{\ss{1+ 2^{1/3}} \,(1+ 4^{2/3})}{2\ss{t\pi}}=:c(t).$$
Consequently, for any $(\phi,\ll)\in \Eig(\Delta)$,
$$\|\nabla\phi\|_\infty\le \|\phi\|_\infty \inf_{t>0} c(t)\e^{\ll t}.$$
In particular, when $\Ric_D\ge 0$, $H_{\pp D}\ge 0$,
\begin{equation}\label{EX} \|\nabla\phi \|_\infty\le \ff{ \ss{\e\,(1+ 2^{1/3})}\, 
(1+ 4^{2/3})}{\ss{2\pi}}\,\ss\ll\,\|\phi\|_\infty,\quad (\phi,\ll)\in \Eig(\Delta).\end{equation}

In this paper, by using stochastic analysis of the Brownian motion on $D$, we develop two-sided gradient estimates; 
the upper bound given below in \eqref{GF} improves the one in~\eqref{EX}.
Our result will also be valid for $\alpha_0\in \R$ satisfying
\begin{equation}\label{AA0bis}
\f12\Delta \rho_{\partial D} \le \alpha_0\quad\text{outside the focal set,}
\end{equation}
where $\rho_{\partial D}$ is the distance to the boundary. The case $\alpha_0<0$ appears naturally in many situations, 
for instance when $D$ is a closed ball with convex distance to the origin. 
Note that by \cite[Lemma 2.3]{W04}, if under \eqref{CV} we define $\alpha_0$ by~\eqref{AA0} then
condition \eqref{AA0bis} holds as a consequence.

For $x\geq0$, in what follows in the limiting case $x=0$ we use the convention $$
\Big(\ff1{1+x}\Big)^{1 /x}:= \lim_{r\downarrow 0} \Big(\ff1{1+r}\Big)^{1/ r}
=\ff 1 \e.$$

\begin{theorem}\label{T1.1} Let $K, \theta\ge 0$ be two constants such that $\eqref{CV}$ holds and let $\alpha_0$ 
be given by~\eqref{AA0} or more generally satisfy~\eqref{AA0bis}.
Then, for any nontrivial $(\phi,\ll)\in \Eig(\Delta)$,
\begin{equation}\label{Th1.1i}  
\ff{\ll}{\ss{d \e (\ll+K)}}\le \ff \ll {\ss{d(\ll+K)}} \Big(\ff\ll{\ll+K}\Big)^{\ll/ (2K)}\le\ff{\|\nabla\phi\|_\infty}{\|\phi\|_\infty}
\end{equation}
and
\begin{equation}\label{Th1.1s1}
\ff{\|\nabla\phi\|_\infty}{\|\phi\|_\infty}
\le  \begin{cases}\ss{\e(\ll+K)}&\text{if}\quad \ss{\ll+K}\ge  {2}A\\
 \sqrt\e\left(A+ \ff{\ll+K}{4A}\right)&\text{if}\quad \ss{\ll+K}\leq {2}A,
\end{cases}\end{equation}
where 
\begin{equation*}
A:=2\alpha_0^++\ff {\sqrt{2(\ll+K)}}{\ss{\pi}}\,\exp\left(-\frac{\alpha_0^2}{2(\ll+K)}\right).
\end{equation*}
In particular, when $\Ric_D\geq0$, $H_{\pp D}\ge 0$,
\begin{equation}\label{GF}\ff{\ss{\ll}}{\ss{d \e }}\le \ff{\|\nabla\phi\|_\infty}{\|\phi\|_\infty}\\
\le \ss\ll \left(\frac{\sqrt{2\e}}{\sqrt\pi}+\frac{\sqrt{\pi\e}}{4\sqrt{2}}\right),\quad
(\phi,\ll)\in\Eig(\Delta).
\end{equation}
\end{theorem}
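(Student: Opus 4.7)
The argument rests on $P_t\phi=\e^{-\ll t}\phi$, valid for every $t>0$, which converts both halves of the claim into Dirichlet heat-semigroup estimates at a free scale $t$ to be optimised as a function of $\ll+K$.

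\textbf{Upper bound.} I plan to refine the Bismut--Elworthy--Li formula underlying \cite{W04}. Running reflecting Brownian motion $(X_s)$ on $D$ with boundary local time $(L_s)$, one writes
\[\nn_v P_tf(x)=\E^x\bigl[f(X_t)\,\Phi_t(v)\bigr],\]
where $\Phi_t$ is a martingale involving the damped parallel transport $Q_s$ satisfying $DQ_s=-\f12\Ric^{\sharp}Q_s\,ds-\I_{\pp D}(Q_s,\cdot)\,dL_s$ together with an auxiliary cutoff $h\in C^1([0,t])$ with $h(0)=1$, $h(t)=0$. The Ricci bound gives $|Q_s|\le \e^{Ks/2}$, while the hypothesis $\f12\DD\rho_{\pp D}\le\aa_0$ dominates $L_s$ in law by the local time at the origin of a one-dimensional Brownian motion with drift $-\aa_0$, whose moments are explicit Gaussians. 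Assembling produces
\[\|\nn P_tf\|_\infty\le \e^{Kt/2}\Big(2\aa_0^+ + \ss{2/(\pi t)}\,\e^{-\aa_0^2 t/2}\Big)\|f\|_\infty.\]
Applied to $f=\phi$ and using $\|\nn P_t\phi\|_\infty=\e^{-\ll t}\|\nn\phi\|_\infty$, the choice $t=1/(\ll+K)$ identifies the bracketed factor with $A$ and yields $\|\nn\phi\|_\infty/\|\phi\|_\infty\le\ss\e\,A$; a separate optimisation in $t$ gives the branch $\ss\e\bigl(A+(\ll+K)/(4A)\bigr)$, so that the dichotomy in \eqref{Th1.1s1} emerges by comparing the two regimes via AM--GM. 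Specialisation to $K=\aa_0=0$ recovers \eqref{GF}.

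\textbf{Lower bound.} Apply Itô's formula to $M_s:=\e^{\ll s}\phi(X_s)$ along Brownian motion killed at the exit time $\tau$: since $\DD\phi=-\ll\phi$ and $\phi|_{\pp D}=0$, $M$ is a local martingale vanishing at $\tau$. Choosing $x_0$ with $|\phi(x_0)|=\|\phi\|_\infty$, the Itô isometry yields
\[\e^{2\ll t}P_t(\phi^2)(x_0)=\phi(x_0)^2+2\,\E^{x_0}\!\!\int_0^{t\wedge\tau}\!\e^{2\ll s}|\nn\phi(X_s)|^2\,ds.\]
Bounding $|\nn\phi|\le\|\nn\phi\|_\infty$ on the right and $P_t(\phi^2)(x_0)\le\|\phi\|_\infty^2$ on the left, and then controlling the exit-time moment $\E[\e^{2\ll(t\wedge\tau)}]$ by a radial comparison of $X_s$ against the $d$-dimensional Euclidean model (with the factor $d$ appearing through $\E|X_s-x_0|^2\sim 2ds$ and the exponential $(\ll/(\ll+K))^{\ll/(2K)}$ arising from a Girsanov correction tied to $\Ric\ge -K$), produces the advertised lower bound after an optimisation in $t$.

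\textbf{Main obstacle.} The most delicate step in the upper bound is the construction of the cutoff profile $h$: it must be tuned so that boundary contributions vanish at time $t$ while the local-time moments collapse to the clean Gaussian factor above, and this tuning is what improves the constants of \cite{W04}. In the lower direction the difficulty is that $\E[\e^{2\ll\tau}]$ generically diverges once $2\ll\ge\ll_1(D)$, so the $L^2$-identity must be truncated carefully and closed using the radial comparison from $\f12\DD\rho_{\pp D}\le\aa_0$. Once these technicalities are navigated, the dichotomy in \eqref{Th1.1s1} and the sharper constants in \eqref{GF} are routine optimisations in $t$.
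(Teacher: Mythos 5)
Your proposal diverges from the paper's proof in both halves, and in both cases there are genuine gaps.

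\textbf{Upper bound.} The Bismut-type formula you invoke — reflecting Brownian motion $(X_s)$, boundary local time $L_s$, and the damped parallel transport obeying $DQ_s=-\tfrac12\Ric^{\sharp}Q_s\,\d s-\I_{\pp D}(Q_s,\cdot)\,\d L_s$ — is the representation of $\nabla P^N_t f$ for the \emph{Neumann} semigroup (Theorem \ref{BSMT} in the paper). For a Dirichlet eigenfunction $\phi$ one has $P^D_t\phi=\e^{-\lambda t/2}\phi$ only for the \emph{Dirichlet} (killed) semigroup, not the Neumann one, so the starting identity $\|\nabla P_t\phi\|_\infty=\e^{-\lambda t/2}\|\nabla\phi\|_\infty$ fails for the semigroup your formula computes. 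Moreover, the hypothesis $\tfrac12\Delta\rho_{\pp D}\le\a_0$ controls the drift of $\rho_{\pp D}(X_s)$ for the \emph{killed} process (and hence the exit law, as in \eqref{E6.5.0.1}); it does not dominate the reflecting boundary local time $L_s$, which is governed by $\I_{\pp D}$, not by $\Delta\rho_{\pp D}$ in the interior. Finally, even granting a bound of the shape you write, applying it to $f=\phi$ at $t=1/(\ll+K)$ produces a factor $\e^{\ll t+Kt/2}=\e^{1-K/(2(\ll+K))}\ge\e^{1/2}$, not the $\sqrt\e$ in \eqref{Th1.1s1}, and the two-branch dichotomy does not arise from AM--GM here. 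In the paper the dichotomy comes structurally from the martingale \eqref{Cruc_Mart}, which stops at $\tau_D$ and splits $\|\nabla\phi\|_\infty$ into a boundary contribution weighted by $\vv=\P(t>\tau_D)$ and an interior contribution weighted by $\sqrt{1-\vv}$; maximising $\vv A+\sqrt{1-\vv}\,B$ over $\vv\in[0,1]$ yields exactly the two regimes in \eqref{Eq_AB}. The boundary piece $\|\nabla\phi\|_{\pp D,\infty}$ is then controlled separately by comparing $\phi$ with the exit probability $\psi(t,x)=\P(\tau_D^x>t)$ and computing $\|\nabla\psi(t,\cdot)\|_{\pp D,\infty}$ from the explicit hitting-time density of a drifted one-dimensional Brownian motion (Lemma \ref{L2.2} and Proposition \ref{P6.3}). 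Your single-step BEL approach has no natural mechanism to separate these two contributions.

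\textbf{Lower bound.} Your It\^o-isometry identity for $M_s=\e^{\lambda s}\phi(X_s)$ is correct in principle, but it does not produce the dimensional factor $1/d$ or the exponent $(\ll/(\ll+K))^{\ll/(2K)}$. After bounding $|\nabla\phi|\le\|\nabla\phi\|_\infty$ on the right, the left side needs a quantitative lower bound on $\e^{2\lambda t}P^D_t(\phi^2)(x_0)-\phi(x_0)^2$; Jensen only gives $\phi(x_0)^2(1/\psi(t,x_0)-1)$, which depends on the geometry through $\psi(t,x_0)$ in an uncontrolled way. The ``radial comparison'' and ``Girsanov correction'' you invoke are heuristics, not a proof, and I don't see how they reproduce the clean constant $\lambda^2/(d\e(\lambda+K))$. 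The paper instead applies the curvature-dimension inequality \eqref{CD} directly to $|\nabla\phi|^2$: since $L\phi=-\ll\phi$, Bochner gives
\[
\tfrac12 L|\nabla\phi|^2\ge-(K+\ll)|\nabla\phi|^2+\tfrac{\ll^2}{d}\phi^2,
\]
with $d$ entering through the $(Lf)^2/n$ term. Integrating this along the killed diffusion and using $\E\big[\1_{\{s<\tau_D\}}\phi(X_s)^2\big]\ge\e^{-\ll s}\phi(x)^2$ produces \eqref{LB1}, and optimising in $t$ gives both the exponent $\ll/(2K)$ and the $\sqrt{\e}$ factor. This Bochner step is the essential ingredient missing from your argument.
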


\begin{proof} This result follows from Theorem \ref{T2.1} and Theorem
\ref{T2.0a} below in the special case  $V=0$. In this case,
$\Ric_D^V=\Ric_D\ge -K$ is equivalent to \eqref{CD} with
$n=d$. More sophisticated upper bounds are given below in Theorem \ref{T2.0a}. 
\end{proof}

By \eqref{GF}, if $D$ is convex with non-negative Ricci curvature then
$\eqref{BB}$ holds with
$$c_1(D)= \ff{1}{\ss{d \e }},\quad c_2(D)= \frac{\sqrt{2\e}}{\sqrt\pi}+\frac{\sqrt{\pi\e}}{4\sqrt{2}}.$$
To give explicit values of $c_1(D)$ and $c_2(D)$ for positive $K$ or $\theta$, let
$\ll_1>0$ be the first Dirichlet eigenvalue of $-\Delta$ on $D$. Then
Theorem \ref{T1.1} implies that $\eqref{BB}$ holds for
\begin{align*} &c_1(D)= \ff{\ss{\ll_1}}{\ss{d\e (\ll_1+K)}},\\
&c_2(D)= \ff{\ss{\e(\ll_1+K)}}{\ss{\ll_1}}\1_{\{B>2A\}}+
\frac{\sqrt\e}{\sqrt{\ll_1}}\left(2\alpha_0^++\ss{\frac{2(\ll_1+K)}{\pi}}
+ \ff{\ll_1+K}{4\big(2\alpha_0^++{\ss{2(\ll_1+K)/\pi}}\big)}\right) 
\1_{\{B\le 2A\}}\end{align*}
with
$$
B=\ss{\ll_1+K} \quad\text{and}\quad 
A=2\alpha_0^++\ss{\frac{2(\ll_1+K)}{\pi}}.
$$
This is due to the fact that the expression for $c_1(D)$ is an increasing
function of $\ll$ and the expression for $c_2(D)$  a decreasing function of
$\ll$.  Since there exist explicit lower bound estimates on $\ll_1$
(see~\cite{W95} and references within), this gives explicit lower
bounds of $c_1(D)$ and explicit upper bounds of $c_2(D)$.

The lower bound for $\|\nabla\phi\|_\infty$ will be derived by
using It\^o's formula for $|\nabla\phi|^2(X_t)$ where $X_t$ is a
Brownian motion (with drift) on $D$, see Subsection 2.1 for
details.   To derive the  upper bound estimate,
  we
will construct some martingales to reduce $\|\nabla\phi\|_\infty$ to
$\|\nabla\phi\|_{\pp D,\infty}^\mstrut:=\sup_{\pp D} |\nabla \phi|$, and to
estimate the latter in terms of $\|\phi\|_\infty$, see Subsection 2.2
for details.

Next, we consider the Neumann problem. Let $\Eig_N(\Delta)$ be the set of non-trivial eigenpairs $(\phi,\ll)$ for the Neumann eigenproblem, i.e. $\phi$ is non-constant, $\Delta\phi=-\ll \phi$ with $N\phi|_{\pp D}=0$ for the unit inward normal vector field $N$ of $\pp D$. Let $\I_{\pp D}$ be the second fundamental form of $\pp D$,
$$\I_{\pp D}(X,Y)=-\langle\nabla_X N,Y\rangle,\quad X,Y\in T_x\partial M,\ x\in \partial M.$$
  With a concrete choice of the function $f$,
the next theorem implies \eqref{BB}  for $(\phi,\ll)\in \Eig_N(\Delta)$ together with explicit constants $c_1(D), c_2(D)$.

\begin{theorem}\label{T1.2} Let $K,\dd\in \R$ be constants such that
\begin{equation}\label{CV1} \Ric_D\ge -K,\quad \I_{\pp D}\ge -\dd.\end{equation} For $f\in C_b^2(\bar D)$ with $\inf\limits_{D} f=1$ 
and $N\log f|_{\pp D}\ge \dd$, let
\begin{align*} &c_\vv(f)= \sup_D\left\{\ff{4\vv |\nabla\log f|^2}{1-\vv} +K   - 2\Delta\log f\right\},\quad \vv\in (0,1),\\
&K(f)=\sup_{D} \big\{2|\nabla\log f|^2 +K -\Delta\log f\big\}.\end{align*}
Then  for any non-trivial $(\phi,\ll)\in \Eig_N(\Delta)$, we have $\ll+c_\vv(f)>0$ and
\begin{align*}   \sup_{\vv\in (0,1)}   \ff{\vv\ll^2}{d\e(\ll+c_\vv(f))\|f\|_\infty^2}   &\le \sup_{\vv\in (0,1)} \ff{\vv\ll^2}{d(\ll+c_\vv(f))\|f\|_\infty^2} \Big(\ff\ll{\ll+c_\vv(f)}\Big)^{\ll/{c_\vv(f)}}\\
&\le\ff{\|\nabla\phi\|_\infty^2}{\|\phi\|_\infty^2}\le  \ff{2\|f\|_\infty^2(\ll+K(f))}{\pi}\Big(1+\ff{K(f)}\ll\Big)^{\ll/{K(f)}} \\ &\le 2\e\,\|f\|_\infty^2 \ff{\ll+K(f)}{\pi}.\end{align*}\end{theorem}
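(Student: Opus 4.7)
The plan is to reduce Theorem \ref{T1.2} to the general drift-diffusion gradient estimates of Theorems \ref{T2.1} and \ref{T2.0a}, applied with $V=2\log f$. Because $\inf_D f=1$ and $N\log f|_{\pp D}\ge\dd\ge -\I_{\pp D}$, the weight $f$ is tailored to compensate the possibly nonconvex boundary: under the $V$-drift, the boundary of $D$ becomes ``$V$-convex,'' so the boundary local-time terms arising when one applies It\^o's formula to $\phi(X_t)$ or $|\nn\phi|^2(X_t)$---with $X_t$ the reflected diffusion generated by $L^V:=\Delta+\nn V$ and $N\phi|_{\pp D}=0$---carry the correct sign and can be dropped.

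Next, I would identify the effective Bakry--\'Emery curvature lower bound produced by $V=2\log f$. The Bochner identity for $L^V$ together with $\Hess(V)=2\Hess(\log f)$ generates a cross term of the form $2\langle\nn\log f,\Hess(\phi)\cdot\nn\phi\rangle$. Bounding it by Young's inequality with parameter $\vv\in(0,1)$ and using $\Delta\log f=f^{-1}\Delta f-|\nn\log f|^2$ yields precisely the lower bound $\Ric_D^V\ge -c_\vv(f)$ required for the left inequality of the theorem, while a rougher Cauchy--Schwarz estimate produces $\Ric_D^V\ge -K(f)$, required for the upper inequality. At this point I invoke Theorem \ref{T2.1} (with the ambient curvature constant $K$ replaced by $c_\vv(f)$) to deduce the stated lower bound, and Theorem \ref{T2.0a} (with $K$ replaced by $K(f)$ and the boundary constant $\alpha_0$ reduced to zero by $V$-convexity) to deduce the stated upper bound. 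Finally, I would optimize over $\vv\in(0,1)$ to obtain the supremum in the lower bound.

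The factor $\|f\|_\infty^2$ distinguishing the Neumann bounds from their Dirichlet counterparts enters because the $V$-diffusion is symmetric with respect to $f^2\,d\vol$ rather than $d\vol$. Converting the weighted supremum estimates inherent in Theorems \ref{T2.1} and \ref{T2.0a} to the geometric $L^\infty$-norm uses the sandwich $1\le f\le \|f\|_\infty$, producing a factor of $\|f\|_\infty^{\pm 2}$ on each side. The terminal inequality $(1+K(f)/\ll)^{\ll/K(f)}\le \e$ is the elementary bound $(1+x)^{1/x}\le \e$ for $x>0$, consistent with the convention recorded just before Theorem \ref{T1.1}.

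The main technical obstacle is the boundary analysis: verifying that the combination of the drift $\nn V=2\nn\log f$ and the Neumann condition $N\phi=0$ really does cancel, or outweigh, the negative contribution from $\I_{\pp D}\ge -\dd$ in the boundary identity $N|\nn\phi|^2=-2\I_{\pp D}(\nn\phi,\nn\phi)$. The hypothesis $N\log f\ge\dd$ is calibrated precisely so that this cancellation holds, but one must track the signs carefully in the It\^o formula for $|\nn\phi|^2(X_t)$ under reflection, ensuring the associated boundary local-time contribution is nonpositive for the upper estimate and can legitimately be discarded.
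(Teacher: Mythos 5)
Your proposal would not go through as written, for several reasons tied to the fact that Theorem~\ref{T1.2} concerns \emph{Neumann} eigenfunctions, while the tools you invoke are designed for the Dirichlet problem.

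First, Theorems~\ref{T2.1} and~\ref{T2.0a} are statements about $(\phi,\ll)\in\Eig(L)$, the Dirichlet eigenpairs. Their proofs rest crucially on $\phi|_{\pp D}=0$: the lower bound in Theorem~\ref{T2.1} uses the stopped process $\phi(X_{s\wedge\tau_D})$ and Jensen's inequality; the upper bound in Theorem~\ref{T2.0a} proceeds through Lemma~\ref{L2.2}, which compares the boundary normal derivative of $\phi$ with that of $\psi(t,\cdot)=\P(\tau_D^\cdot>t)$, both of which vanish on $\pp D$. None of this has a Neumann analogue. The paper proves Theorem~\ref{T1.2} via Theorem~\ref{T3.2}, which uses reflected diffusion (with local time), an It\^o formula applied to $f^2|\nabla\phi|^2$, and --- for the upper bound --- a Bismut-type derivative formula for the Neumann semigroup with the multiplicative functional $Q_s$; this is an entirely different toolkit, and there is simply no Dirichlet theorem to ``reduce to.''

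Second, the drift substitution $V=2\log f$ does not do what you claim. The Neumann eigenfunction $\phi$ of $-\Delta$ is not an eigenfunction of $-(\Delta+2\nabla\log f)$, so the Bochner-type identity driving Theorem~\ref{T2.1} would no longer close. More fundamentally, adding a drift $\nabla V$ to a reflected diffusion leaves the boundary mechanism untouched: the local-time contribution in It\^o's formula for $|\nabla\phi|^2(X_t)$ is still $N|\nabla\phi|^2\,\d\ell_t=-2\I_{\pp D}(\nabla\phi,\nabla\phi)\,\d\ell_t$, regardless of $V$, so the sign problem for non-convex $\pp D$ is not repaired by any ``$V$-convexity.'' The paper's actual fix is to multiply by $f^2$ before applying It\^o: then
$N(f^2|\nabla\phi|^2)=2f^2\big[(N\log f)\,|\nabla\phi|^2+\I_{\pp D}(\nabla\phi,\nabla\phi)\big]\ge0$
precisely under the hypothesis $N\log f\ge\dd\ge-\I_{\pp D}$. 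This is a weight, not a drift change.

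Third, the constant $c_\vv(f)$ does not arise as a lower bound on $\Ric_D^V$ with $V=2\log f$: that would give the tensor quantity $\Ric_D-2\Hess\log f$, whereas $c_\vv(f)$ involves the scalar $\Delta\log f$ and a Young's-inequality term $\ff{4\vv|\nabla\log f|^2}{1-\vv}$. In the paper it appears because one splits $\ff12L|\nabla\phi|^2$ into a convex combination of the curvature-dimension inequality (weight $\vv$, producing $\ff{\ll^2}{n}\phi^2$) and the Hessian term (weight $1-\vv$), and then uses Young's inequality to absorb the cross term $\langle\nabla f^2,\nabla|\nabla\phi|^2\rangle$ against $(1-\vv)f^2\|\Hess\phi\|_{\HS}^2$. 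Similarly, the upper bound constant $K(f)$ comes from estimating $\E\|Q_t\|^2$ via the local martingale $f^{-p}(X_t)\exp(\cdots)$ trick, not from ``reducing $\alpha_0$ to zero'' --- there is no $\alpha_0$ in the Neumann setting. The appearance of $\|f\|_\infty^2$ is also different from what you describe: it comes from $1\le f(x)\le\|f\|_\infty$ in the estimates on $f^2|\nabla\phi|^2$ and on $\E\|Q_t\|^2$, not from a weighted volume measure.

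In short, the right path is the one the paper actually takes via Theorem~\ref{T3.2}: a reflected-diffusion It\^o argument on $f^2|\nabla\phi|^2$ for the lower bound, and a Bismut-formula argument with the boundary multiplicative functional for the upper bound. The Dirichlet framework of Section~2 cannot be made to apply.
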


\begin{proof} Under the conditions \eqref{CV}, Theorem \ref{T3.2} below applies with
$L=\Delta$, $K_V=K$ and $n=d$. The desired estimates are immediate consequences. \end{proof}

When $\pp D$ is convex, i.e.~$\I_{\pp D}\ge 0$, we may take $f\equiv 1$ in Theorem \ref{T1.2} to derive the following result. According to Theorem \ref{T3.1} below, this result also holds for $\pp D=\varnothing$ where $\Eig(\Delta)$ is the set of eigenpairs for the closed eigenproblem.

\begin{corollary} Let $\pp D$ be convex or empty. If $\Ric_D^V\ge -K$ for some constant $K$, then
 for any non-trivial $(\phi,\ll)\in \Eig_N(\Delta)$, we have $\ll+K>0$ and
$$ \ff{\ll^2}{d\e (\ll+K^+)}   \le  \ff{\ll^2}{d(\ll+K)} \Big(\ff\ll{\ll+K}\Big)^{\ll/{K}} \le \ff{\|\nabla\phi\|_\infty^2}{\|\phi\|_\infty^2} \le  \ff{2(\ll+K)}{\pi}\Big(1+\ff{K}\ll\Big)^{\ll/{K}} \le\ff{2\e (\ll+K^+) }{\pi}.$$ \end{corollary}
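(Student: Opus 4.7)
The plan is to specialize Theorem \ref{T1.2} to the constant test function $f\equiv 1$ and then sharpen the outermost bounds via elementary properties of $(1+x)^{1/x}$.

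Consider first the case where $\pp D$ is convex, so that $\I_{\pp D}\ge 0$ and \eqref{CV1} holds with $\dd=0$. The choice $f\equiv 1$ satisfies $\inf_D f=1$ and $N\log f|_{\pp D}=0\ge\dd$, so $f$ is admissible in Theorem \ref{T1.2}. Since $\nn\log f\equiv 0$, $\DD\log f\equiv 0$ and $\|f\|_\infty=1$, the constants appearing in Theorem \ref{T1.2} collapse to $c_\vv(f)\equiv K$ (for every $\vv\in(0,1)$) and $K(f)=K$. Feeding these values into the chain of inequalities of Theorem \ref{T1.2} and passing to the supremum $\vv\uparrow 1$ yields both $\ll+K>0$ and the two middle inequalities of the corollary. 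For $\pp D=\varnothing$ the same bounds are extracted instead from Theorem \ref{T3.1}, as already indicated in the text preceding the statement.

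It remains to replace $\ll+K$ by $\ll+K^+$ in the outermost denominators, that is, to prove
\begin{align*}
(\mathrm{A})\ \ \ff{\ll^2}{d\e(\ll+K^+)} &\le \ff{\ll^2}{d(\ll+K)}\Big(\ff{\ll}{\ll+K}\Big)^{\ll/K},\\
(\mathrm{B})\ \ \ff{2(\ll+K)}{\pi}\Big(1+\ff K\ll\Big)^{\ll/K} &\le \ff{2\e(\ll+K^+)}{\pi}.
\end{align*}
Setting $x:=K/\ll\in(-1,\infty)\setminus\{0\}$, both (A) and (B) reduce, when $K\ge 0$, to the classical inequality $(1+x)^{1/x}\le\e$. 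When $K<0$, so that $K^+=0$ and $x\in(-1,0)$, both (A) and (B) collapse instead to the single bound $(1+x)^{1+1/x}\le\e$. The latter follows by checking that $\vartheta(x):=(x+1)\log(1+x)/x$ satisfies $\vartheta'(x)=(x-\log(1+x))/x^2\ge 0$ on $(-1,\infty)\setminus\{0\}$ and extends continuously with $\vartheta(0)=1$; hence $\vartheta(x)\le 1$ for $x\in(-1,0)$, and exponentiating gives the required inequality.

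The only step that needs a moment of care is the sign-dependent case analysis in the last paragraph; all of the substantive analysis is already carried out in Theorems \ref{T1.2} and \ref{T3.1}, so the corollary is essentially a transcription of those results under the simplifying choice $f\equiv 1$.
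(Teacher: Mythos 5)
Your proposal is correct and takes essentially the paper's route: specialize $f\equiv 1$ in Theorem \ref{T1.2} (so $c_\vv(f)\equiv K$, $K(f)=K$, $\|f\|_\infty=1$), let $\vv\uparrow 1$, and invoke Theorem \ref{T3.1} when $\pp D=\varnothing$. The only supplementary step you supply — replacing $K$ by $K^+$ in the outermost bounds via the monotonicity of $\vartheta(x)=(x+1)\log(1+x)/x$ and the classical bound $(1+x)^{1/x}\le\e$ for $x\ge0$ — is correctly carried out; in the paper this $K^+$, resp.\ $c_\vv(f)^+$, is already built into the statements of Theorems \ref{T3.1} and \ref{T3.2}, so the corollary is there read off directly.
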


\section{Proof of Theorem \ref{T1.1}}
In general, we will consider Dirichlet eigenfunctions for the
symmetric operator $L:=\Delta+\nabla V$ on $D$ where $V\in C^2(D)$. We denote
by $\Eig(L)$ the set of pairs $(\phi,\ll)$ where $\phi$ is a
Dirichlet eigenfunction of $-L$ on $D$ with eigenvalue $\ll$.

In the following two subsections, we consider the lower bound and upper bound estimates respectively.

\subsection{Lower bound estimate}
In this subsection we will estimate $\|\nabla\phi\|_\infty$ from below using the following
Bakry-\'Emery cur\-va\-ture-dimension condition: \begin{equation}\label{CD} \ff 1 2 L
|\nabla f|^2 - \<\nabla Lf,\nabla f\>\ge - K |\nabla f|^2 + \ff{(Lf)^2}n,\quad
f\in C^\infty(D),\end{equation} where $K\in\R$, $n\ge d$ are two
constants. When $V=0$, this condition with $n=d$ is equivalent to
$\Ric_D\ge -K$.

\begin{theorem}[Lower bound estimate]\label{T2.1} Assume that $\eqref{CD}$ holds. Then \begin{equation}\label{LB1}
\|\nabla\phi\|_\infty^2 \ge \|\phi\|_\infty^2\sup_{t>0} \ff{\ll^2
  (\e^{Kt}-1)}{n K\e^{(\ll+K)^+t}},\quad (\phi,\ll)\in
\Eig(L).\end{equation} Consequently, for $K^+:= \max\{0,K\}$ there
holds \begin{equation}\label{LB2} \ \|\nabla\phi\|_\infty^2 \ge \ff {\ll^2\|\phi\|_\infty^2} {n(\ll+K^+)} 
\Big(\ff\ll{\ll+K^+}\Big)^{\ll/{K^+}}\ge  \ff{\ll^2\|\phi\|_\infty^2}{n \e
  (\ll+K^+)},\quad (\phi,\ll)\in
\Eig(L).\end{equation}
\end{theorem}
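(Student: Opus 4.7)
The plan is to derive an integral inequality by applying It\^o's formula to $|\nabla\phi|^2$ along the $L$-diffusion killed on $\pp D$, using the curvature-dimension inequality \eqref{CD} and the eigenfunction equation $L\phi=-\ll\phi$, and then to optimize the resulting inequality in a time parameter. First I would fix a point $x_0\in D$ at which $|\phi|$ attains $\|\phi\|_\infty$; since $\phi|_{\pp D}=0$, this maximum lies strictly in the interior, hence $\nabla\phi(x_0)=0$. Let $(X_t)$ denote the $L$-diffusion starting at $x_0$ and $\tau$ its first hitting time of $\pp D$; the Dirichlet eigenfunction identity $P_t^D\phi=\e^{-\ll t}\phi$ then gives $\E^{x_0}[\phi(X_t)\1_{\{t<\tau\}}]=\e^{-\ll t}\phi(x_0)$.

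Next, setting $a:=(\ll+K)^+$, I would consider the process $Y_t:=\e^{2a(t\wedge\tau)}|\nabla\phi|^2(X_{t\wedge\tau})$. Substituting $L\phi=-\ll\phi$ into \eqref{CD} yields $\f12 L|\nabla\phi|^2\geq -(\ll+K)|\nabla\phi|^2 + \f{\ll^2}{n}\phi^2$, so by It\^o's formula, after discarding the non-negative drift $2(a-(\ll+K))|\nabla\phi|^2\geq 0$,
$$ dY_t \geq \f{2\ll^2}{n}\,\e^{2at}\,\phi^2(X_t)\,\1_{\{t<\tau\}}\,dt + dM_t $$
for a local martingale $M_t$. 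Since $a\geq 0$, the uniform bound $Y_{t\wedge\tau}\leq \e^{2at}\|\nabla\phi\|_\infty^2$ holds; combining with $Y_0=0$ and taking expectations (after a standard localization) gives
$$ \e^{2at}\|\nabla\phi\|_\infty^2 \geq \f{2\ll^2}{n}\int_0^t \e^{2as}\,\E^{x_0}\bigl[\phi^2(X_s)\1_{\{s<\tau\}}\bigr]\,ds. $$
A Cauchy--Schwarz estimate for the sub-Markovian operator $P_s^D$ then yields $\E^{x_0}[\phi^2(X_s)\1_{\{s<\tau\}}]\geq (P_s^D\phi(x_0))^2=\e^{-2\ll s}\|\phi\|_\infty^2$. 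Inserting this, evaluating the resulting elementary integral (with the natural convention in the limit $K\to 0$), and rescaling $t\mapsto t/2$ produces \eqref{LB1}; the refined \eqref{LB2} follows by explicit optimization in $t$ with critical point $t_\ast=\f{1}{K^+}\log\f{\ll+K^+}{\ll}$, after which the elementary inequality $(1/(1+x))^{1/x}\geq 1/\e$ delivers the cleaner final form.

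The principal obstacle is that $|\nabla\phi|^2$ need not vanish on $\pp D$, so a direct semigroup-based commutation argument is unavailable; the exponential weight $\e^{2at}$ (made legitimate precisely by the choice $a\geq 0$) is what absorbs the boundary contribution at the killing time $\tau$ via the uniform bound $Y_{t\wedge\tau}\leq \e^{2at}\|\nabla\phi\|_\infty^2$. The second subtle point is closing the estimate in terms of $\|\phi\|_\infty^2$ rather than some uncontrolled quantity, which is the role played by the Cauchy--Schwarz step combined with the choice $X_0=x_0$: at this point $\nabla\phi$ conveniently vanishes, so the initial contribution $|\nabla\phi|^2(x_0)$ drops out, while $\phi^2(x_0)$ realizes $\|\phi\|_\infty^2$ exactly.
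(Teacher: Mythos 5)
Your argument is essentially the paper's: apply It\^o's formula to $|\nabla\phi|^2$ along the killed diffusion, invoke the curvature--dimension condition \eqref{CD} together with $L\phi=-\ll\phi$ to bound the drift from below, use an exponential weight made admissible by the choice $(\ll+K)^+\ge0$ to absorb the boundary contribution at $\tau$, and close via Jensen/Cauchy--Schwarz and $P_s^D\phi=\e^{-\ll s}\phi$; the difference in time-scale (generator $L$ versus $\f12L$) is cosmetic and your rescaling $t\mapsto t/2$ handles it. Two remarks are in order, one substantive. The substantive one is the passage from \eqref{LB1} to \eqref{LB2}: you say ``\eqref{LB2} follows by explicit optimization in $t$ with critical point $t_\ast=\frac1{K^+}\log\frac{\ll+K^+}{\ll}$,'' but this is not an optimization of \eqref{LB1} as stated when $K<0$. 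Indeed, if $K<0$ is so negative that $\ll+K\le0$, the exponential $\e^{(\ll+K)^+t}\equiv1$ and the supremum in \eqref{LB1} equals $\ll^2/(n|K|)$ (attained as $t\to\infty$), which is \emph{strictly smaller} than the claimed $\ll/(n\e)$ as soon as $|K|>\ll\e$. The missing ingredient is the monotonicity of \eqref{CD} in $K$: since $-K^+\le -K$, condition \eqref{CD} holds with $K^+$ in place of $K$, and one must re-run the entire argument with $K^+$ before optimizing. This is exactly what the paper does (``we may and do assume that $K\ge0$''), and your formula for $t_\ast$ suggests you had it in mind, but as written the replacement is neither stated nor justified.

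The second remark is minor: the observation that $\nabla\phi(x_0)=0$ at the interior maximizer is true but superfluous. In your own estimate you discard the initial term $Y_0=|\nabla\phi|^2(x_0)\ge0$ on the favourable side of the inequality, so its exact value is irrelevant; the only feature of $x_0$ that is actually used is that $\phi(x_0)^2=\|\phi\|_\infty^2$, which is how the estimate closes in $\|\phi\|_\infty$.
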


\begin{proof} Let $X_t$ be the diffusion process generated by
$\ff 1 2 L$ in $D$, and let
$$\tau_D:= \inf\{t\ge 0: X_t\in\pp D\}.$$
By It\^o's formula, we have \begin{equation}\label{02.2} \d |\nabla \phi|^2(X_t)= \ff
1 2 L |\nabla \phi|^2(X_t) \,\d t +\d M_t,\quad t\le
\tau_D,\end{equation} for some martingale $M_t$.  By the curvature
dimension condition \eqref{CD} and $L\phi=-\ll\phi$, we obtain
\begin{equation}\label{LMN}\ff 1 2 L  |\nabla \phi|^2 = \ff 1 2 L|\nabla \phi|^2 - \<\nabla L \phi, \nabla\phi\>- \ll |\nabla \phi|^2 \ge -(K+\ll)|\nabla\phi|^2 +\ff {\ll^2}n \phi^2.\end{equation}
Therefore, \eqref{02.2} gives
$$\d |\nabla \phi|^2(X_t)\ge \Big(\ff{\ll^2}n \phi^2- (K+\ll)|\nabla\phi|^2 \Big)(X_t)\,\d t +\d M_t,\quad t\le \tau_D.$$
Hence, for any $t>0$,
\begin{align*} \e^{(K+\ll)^+t}\, \|\nabla\phi\|^2_\infty &\ge \E\Big[ |\nabla\phi|^2(X_{t\land \tau_D}) \e^{(K+\ll)(t\land \tau_D)}\Big]\\
&\ge \ff {\ll^2}{n} \E\left[\int_0^{t\land\tau_D} \e^{(K+\ll)s}
  \phi(X_s)^2\,\d s\right]\\
&= \ff {\ll^2}{n} \E\left[ \int_0^t
  \1_{\{s<\tau_D\}} \e^{(K+\ll)s} \phi(X_s)^2\,\d s\right].\end{align*}
Since $\phi|_{\pp D}=0$ and $L\phi =-\ll\phi$, by Jensen's inequality
we have
$$\E \left[\1_{\{s<\tau_D\}} \phi(X_s)^2\right]\ge \big(\E [\phi(X_{s\land \tau_D})]\big)^2=\e^{-\ll s} \phi(x)^2,$$
where $x=X_0\in D$ is the starting point of $X_t$. Then, by taking $x$
such that $\phi(x)^2= \|\phi\|_\infty^2$, we arrive at
\begin{align*} &\e^{(K+\ll)^+t} \,\|\nabla\phi\|^2_\infty \ge \ff {\ll^2}{n}\int_0^t \e^{(K+\ll)s} \e^{-\ll s} \phi(x)^2\,\d s\\
&\quad= \ff {\ll^2\|\phi\|_\infty^2}{n}\int_0^t \e^{Ks} \,\d s=
\ff{\ll^2(\e^{Kt}-1)}{nK} \|\phi\|_\infty^2.\end{align*} This
completes the proof of \eqref{LB1}.

Since \eqref{CD} holds for $K^+$ replacing $K$, we may and do assume
that $K\ge 0$.  By taking the optimal choice  $t=\ff 1 {K}\log (1+\ff K\ll)$ (by convention $t=\ll^{-1}$ if $K=0$) in \eqref{LB1}, we obtain
 $$\|\nabla\phi\|_\infty^2 \ge \ff{\ll^2\|\phi\|_\infty^2 }{\ll+K} \Big(\ff\ll{\ll+K}\Big)^{\ll/ K} \ge \ff{\ll^2\|\phi\|_\infty^2}{n\e (\ll+K)}.$$
 Hence \eqref{LB2} holds.
\end{proof}

\subsection{Upper bound estimate}

Let $\Ric^V_D= \Ric_D-\Hess_V$. For $K_0,\theta\ge 0$ such that $\Ric_D\ge -K_0$ and 
$H_{\pp D}\ge -\theta$, let
\begin{equation}\label{AA} \alpha= \ff 1 2 \left(\max\big\{ \theta,\ss{(d-1) K_0}\big\} +\|\nabla
  V\|_\infty\right)\end{equation}
We note that $\ff 1 2 L\rr_{\pp D}\le \alpha$ by \cite[Lemma 2.3]{W04}.

\begin{theorem}[Upper bound estimate]\label{T2.0a} Let $K_V,\theta\ge 0$ be constants such that
$$\Ric^V_D\ge -K_V,\quad H_{\pp D}\ge -\theta.$$ 
Let $\alpha\in \R$ be such that
  \begin{equation}\label{AAbisa}  \ff 1 2 L\rho_{\partial D}\le \alpha .\end{equation}

{\rm1.} Assume $\alpha\ge0$.  Then, for any nontrivial
$(\phi,\ll)\in \Eig(L)$,
\begin{equation}\label{T2.0.1a} 
\ff{\|\nabla\phi\|_\infty}{\|\phi\|_\infty}
\le  \begin{cases}\ss{\e(\ll+K_V)}&\text{if}\quad \ss{\ll+K_V}\ge {2}A\\
 \ss{\e}\left(A+ \ff{\ll+K_V}{4A}\right)&\text{if}\quad \ss{\ll+K_V}\leq {2}A,
\end{cases}\end{equation}
where 
\begin{equation}\label{Eq:Aa}
A:=\alpha+\ff {\sqrt{2(\ll+K_V)}}{\ss{\pi}}\,\exp\left(-\frac{\alpha^2}{2(\ll+K_V)}\right) 
+|\alpha|\wedge \ff{\sqrt{2}\alpha^2}{\ss{\pi(\ll+K_V)}}.\end{equation}
In particular, \eqref{T2.0.1a} holds with $A$ replaced by
\begin{equation}\label{Eq:A'a}
A':=2\alpha+\ff {\sqrt{2(\ll+K_V)}}{\ss{\pi}}\,\exp\left(-\frac{\alpha^2}{2(\ll+K_V)}\right) 
.\end{equation}
We also have
\begin{equation}\label{T2.0.1aa} 
\ff{\|\nabla\phi\|_\infty}{\|\phi\|_\infty}
\le \sqrt\e\left( \frac{2\alpha+\ss{2(\ll+K_V)}}{\sqrt\pi}
+\frac{\ll+K_V}4\,\frac{\sqrt\pi}{2\alpha+\ss{2(\ll+K_V)}}\right).
\end{equation}

{\rm2.} Assume $\alpha\le0$. Then, for any nontrivial
$(\phi,\ll)\in \Eig(L)$,
\begin{equation}\label{T2.0.1bb} 
\ff{\|\nabla\phi\|_\infty}{\|\phi\|_\infty}
\le  \begin{cases}\ss{\e(\ll+K_V)}&\text{if}\quad \ss{\ll+K_V}\ge {2}A^*\\
 \ss{\e}\left(A^*+ \ff{\ll+K_V}{4A^*}\right)&\text{if}\quad \ss{\ll+K_V}\leq {2}A^*,
\end{cases}\end{equation}
where 
\begin{equation}\label{Eq:Abb}
A^*:=\ff {\sqrt{2(\ll+K_V)}}{\ss{\pi}}\,\exp\left(-\frac{\alpha^2}{2(\ll+K_V)}\right).\end{equation} 
In particular,
\begin{equation}\label{T2.0.1b} 
\ff{\|\nabla\phi\|_\infty}{\|\phi\|_\infty}
\le  \ss{\ll+K_V}\,\left(\sqrt{\frac2{\pi}}+\frac14\sqrt{\frac{\pi}{2}}\right)\sqrt\e.
\end{equation}
In addition, the following estimate holds:
\begin{equation}\label{T2.0.1_neg} 
\ff{\|\nabla\phi\|_\infty}{\|\phi\|_\infty}
\le  \begin{cases}\ss{\e(\ll+K_V)}&\text{if}\quad \ss{\ll+K_V}\ge  2\sqrt{\e}\hat A\\
 \e \hat A+ \ff{\ll+K_V}{4\hat A}&\text{if}\quad \ss{\ll+K_V}<  2\sqrt{\e}\hat A,
\end{cases}\end{equation}
where 
\begin{equation}\label{Eq:A_neg}
\hat A:=\alpha+\ff{\ss{2\ll}}{\ss\pi} \e^{-\ff{\alpha^2}{2\ll}}+ |\alpha|\land \ff{\sqrt{2}\alpha^2}{\ss{\pi\ll}}.
\end{equation} \end{theorem}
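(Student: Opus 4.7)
My plan follows the two-step strategy announced after Theorem~\ref{T1.1}: first use a submartingale argument on the $L/2$-diffusion $X_t$ to reduce the interior sup-norm of $|\nabla\phi|$ to its boundary sup-norm $\|\nabla\phi\|_{\pp D,\infty}:=\sup_{\pp D}|\nabla\phi|$, then estimate the latter by $\|\phi\|_\infty$ via a barrier built from the boundary distance $\rho_{\pp D}$.

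For the reduction to the boundary, the Bochner identity used in the proof of Theorem~\ref{T2.1} yields $\tfrac12 L|\nabla\phi|^2\ge -(K_V+\ll)|\nabla\phi|^2+\tfrac{\ll^2}{d}\phi^2$, so by It\^o's formula the process $M_t=\e^{2(K_V+\ll)(t\wedge\tau_D)}|\nabla\phi|^2(X_{t\wedge\tau_D})$ is a submartingale. Splitting on $\{\tau_D\le t\}$ versus $\{\tau_D>t\}$ and using $|\nabla\phi|=|N\phi|$ on $\pp D$ gives
$$|\nabla\phi|^2(x)\le\|\nabla\phi\|_{\pp D,\infty}^2\,\E_x\bigl[\e^{2(K_V+\ll)\tau_D}\1_{\{\tau_D\le t\}}\bigr]+\|\nabla\phi\|_\infty^2\,\e^{2(K_V+\ll)t}\,\PP_x(\tau_D>t).$$
It\^o's formula applied to $\rho_{\pp D}(X_{s\wedge\tau_D})$ combined with the hypothesis $\tfrac12 L\rho_{\pp D}\le\alpha$ produces the stochastic dominance $\rho_{\pp D}(X_{s\wedge\tau_D})\le\rho_{\pp D}(x)+\beta_s+\alpha(s\wedge\tau_D)$ for a standard one-dimensional Brownian motion $\beta$, which controls $\PP_x(\tau_D>t)$ by a Gaussian tail and produces the factor $\exp(-\alpha^2/(2(\ll+K_V)))$ after optimal choice of $t$.

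For the boundary estimate, the tangential component of $\nabla\phi$ on $\pp D$ vanishes (since $\phi\equiv 0$ there), so $|\nabla\phi|_{\pp D}=|N\phi|_{\pp D}$. I would compare $\phi$ with a barrier $\psi=C\,h(\rho_{\pp D})$ in a focal-free strip $\{\rho_{\pp D}\le R\}$, where $h$ satisfies $h''+2\alpha h'+(\ll+K_V)h=0$ on $(0,R]$ with $h(0)=0$ and $h'(0)=1$. The hypothesis $\tfrac12 L\rho_{\pp D}\le\alpha$ implies $L\psi\le -\ll\psi$ inside the strip; choosing $C=\|\phi\|_\infty/h(R)$ ensures $\phi\le\psi$ at $\rho_{\pp D}=R$, and the maximum principle applied to $\phi-\psi$ then gives $\phi\le\psi$ throughout the strip, whence $|N\phi|_{\pp D}\le C h'(0)=\|\phi\|_\infty/h(R)$. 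Optimizing over $R$ using the explicit form of $h$ produces the expressions for $A$, $A^*$, and $\hat A$.

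Combining the two steps and optimizing over $t$ by an AM-GM inequality produces the characteristic form $\sqrt{\e}\,(A+(\ll+K_V)/(4A))$ appearing in~\eqref{T2.0.1a}, \eqref{T2.0.1bb}, and \eqref{T2.0.1_neg}; the two branches in each inequality distinguish whether the AM-GM optimum lies inside or on the boundary of the admissible range of $t$. The main obstacle is the sign-dependent analysis of $\alpha$: for $\alpha\ge 0$ the distance process drifts toward $\pp D$, slowing the escape and forcing the extra correction $|\alpha|\wedge \sqrt{2}\alpha^2/\sqrt{\pi(\ll+K_V)}$ in the definition of $A$; for $\alpha\le 0$ the drift points away from $\pp D$ so only the clean Gaussian term in $A^*$ remains, yielding the simpler estimate~\eqref{T2.0.1b}. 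The refined bound~\eqref{T2.0.1_neg} exploits an alternative averaging in which $\ll+K_V$ is replaced by $\ll$ inside the Gaussian tail, which becomes advantageous when $K_V$ is large relative to $\ll$.
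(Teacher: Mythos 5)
Your boundary barrier idea is in the right spirit, but the interior reduction has a genuine gap that cannot be patched, and it is precisely the place where the paper uses its key construction.

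The submartingale argument on $|\nabla\phi|^2$ does not close. Your displayed estimate reads
$$|\nabla\phi|^2(x)\le\|\nabla\phi\|_{\pp D,\infty}^2\,\E_x\bigl[\e^{(K_V+\ll)\tau_D}\1_{\{\tau_D\le t\}}\bigr]+\|\nabla\phi\|_\infty^2\,\e^{(K_V+\ll)t}\,\P_x(\tau_D>t),$$
(the exponent should be $(K_V+\ll)$ rather than $2(K_V+\ll)$, but that is cosmetic). Taking the supremum over $x$ leaves $\|\nabla\phi\|_\infty^2$ on both sides with interior coefficient $\e^{(K_V+\ll)t}\sup_x\P_x(\tau_D>t)$. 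For any fixed $t$ and $x$ deep in the interior, $\P_x(\tau_D>t)$ is close to $1$, and for $\ll+K_V\ge0$ the exponential prefactor is $\geq1$; the stochastic dominance via $\rho_{\pp D}(x)+\beta_s+\alpha s$ makes things worse for large $\rho_{\pp D}(x)$, not better. So the inequality gives nothing. The Bochner inequality for $|\nabla\phi|^2$ is the engine of the \emph{lower} bound (Theorem~\ref{T2.1}); it points the wrong way for an upper bound. What the paper actually does is construct the genuine martingale
$$N_t(v)= h_t\,\e^{\ll t/2}\,\<\nabla\phi(X_t),Q_tv\>-\e^{\ll t/2}\,\phi(X_t)\int_0^t\<\dot h_s Q_sv,u_s\d B_s\>,\qquad t\le\tau_D,$$
built from the damped parallel transport $Q_t$ (solving $DQ_t=-\tfrac12(\Ric_D^V)^\sharp Q_t\,\d t$) together with an interpolating function $h$ satisfying $h_0=1$, $h_t=0$. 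On $\{\tau_D<t\}$ the $\phi$-term vanishes since $\phi|_{\pp D}=0$; on $\{\tau_D\ge t\}$ the cutoff $h_t=0$ kills the $\nabla\phi$-term. The resulting identity $\nabla_v\phi(x)=\E N_{t\wedge\tau_D}(v)$ therefore replaces the interior contribution by $\|\phi\|_\infty$ times a stochastic integral, which is exactly what removes the circularity. This first-order Bismut-type martingale is the missing idea in your argument.

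Two further points. First, your barrier ODE $h''+2\alpha h'+(\ll+K_V)h=0$ would produce trigonometric/hyperbolic constants, not the Gaussian-density expressions in $A$, $A^*$, $\hat A$; the paper instead compares $\phi$ with $\psi(t,x)=\P(\tau_D^x>t)$ (Lemma~\ref{L2.2}) and computes $\|\nabla\psi(t,\cdot)\|_{\pp D,\infty}$ explicitly from the density of the first passage time of drifted Brownian motion (Proposition~\ref{P6.3}, via the reflection principle and Girsanov). Second, $K_V$ should not appear in the boundary estimate at all: the curvature lower bound enters only through $Q_t$ in the interior martingale, while the boundary estimate depends only on $\alpha$ through $\tfrac12 L\rho_{\pp D}\le\alpha$. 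Your proposal conflates the two.
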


The strategy to prove Theorem \ref{T2.0a} will be to first estimate $\|\nabla\phi\|_\infty$ in terms
of $\|\phi\|_\infty$ and $\|\nabla \phi\|_{\pp D,\infty}^\mstrut$ (see estimate \eqref{Eq:Est} below) where
$\|f\|_{\pp D,\infty}^\mstrut := \|\1_{\pp D}f\|_\infty$ for a function
$f$ on $D$.
The this end we construct appropriate martingales in terms of 
$\phi$ and $\nabla\phi$.

We start by recalling the necessary facts about the diffusion
process generated by $\frac12L$, see for instance
\cite{Arnaudon-Thalmaier-Wang:09,Hsu}. 
For any $x\in D$, the diffusion
$X_t$ solves the SDE \begin{equation}\label{E1} \d X_t= \ff 1 2 \nabla V(X_t)\,\d t +
u_t\circ \d B_t,\quad X_0=x,\ t\le \tau_D,
\end{equation}
where $B_t$ is a $d$-dimensional Brownian motion, $u_t$ is the
horizontal lift of $X_t$ onto the ortho\-normal frame bundle $\O(D)$
with initial value $u_0\in \O_x(D),$ and
$$\tau_D:=\inf \{t\ge 0: X_t\in\pp D\}$$ is the hitting time of $X_t$ to the boundary $\pp D$.
Setting $Z:=\nabla V$, we have \begin{equation}\label{E1_lift} \d u_t= \ff 1 2
Z^*(u_t)\,\d t + \sum_{i=1}^d H_i(u_t)\circ \d B_t^i
\end{equation}
where $Z^*(u):=h_u(Z_{\pi(u)})$ and $H_i(u):=h_u(ue_i)$ are defined by
means of the horizontal lift $h_u\colon T_{\pi(u)}D\to T_u\O(D)$ at
$u\in \O(D)$. Note that formally
$h_{u_t}(u_t\circ \d B_t)=\sum_ih_{u_t}(u_te_i)\circ \d
B_t^i=\sum_iH_i(u_t)\circ \d B_t^i$.

For $f\in C^\infty(D)$, let $a:=\d f\in\Gamma(T^*D)$.  Setting
$m_t:=u_t^{-1}a(X_t)$, we see by It\^o's formula that
\begin{equation}\label{Eq:1}
  \d m_t\mequal\frac12 u_t^{-1}(\square a+\nabla_Za)(X_t)\,\d t
\end{equation}
where $\square a=\trace\nabla^2a$ denotes the so-called connection (or
rough) Laplacian on 1-forms and $\overset{\rm m}{=}$ equality modulo
the differential of a local martingale.

Denote by $Q_t\colon T_xD\to T_{X_t}D$ the solution, along the paths
of $X_t$, to the covariant ordinary differential equation
$$DQ_t=-\frac12 (\Ric_D^V)^\sharp Q_t\,\d t,\quad Q_0=\id_{T_xD},\ t\le \tau_D,$$
where $D:=u_t{\d} u_t^{-1}$ and where by definition
$$(\Ric^V_D)^\sharp v= \Ric^V_D(\newdot,v)^\sharp ,\quad v\in T_xD.$$
Thus, condition $\Ric_D^V\ge -K_V$ implies \begin{equation}\label{VT} |Q_tv|\le
\e^{\ff {K_V} 2 t}\,|v|,\quad t\le \tau_D.
\end{equation}
Finally, note that for any smooth function $f$ on $D$, we have by the
Weitzenb\"ock formula:
\begin{align}
  \d\big(\Delta+Z\big)f&=\d\big(-\d^*\d f+(\d f)Z\big)\notag\\
                       &=\Delta^{(1)}\d f+\nabla_Z \d f+\langle\nabla_\bull Z,\nabla f\rangle\notag\\
                       &=(\square+\nabla_Z )(\d f)-\Ric_D^V(\newdot,\nabla f)\notag\\
                       &=\big(\square-\Ric_D^V+\nabla_Z\big)(\d f)\label{Eq:Comm}
\end{align}
where $\Delta^{(1)}$ denotes the Hodge-deRham Laplacian on 1-forms.

Now let $(\phi,\ll)\in \Eig(L)$, i.e. $L\phi=-\ll\phi$, where
$L=\Delta+Z$.  For $v\in T_xD$, consider the
process $$n_t(v):=(\d\phi)(Q_tv).$$ Then
$$n_t(v)=\langle\nabla\phi(X_t),Q_tv\rangle=\langle u_t^{-1}(\nabla\phi)(X_t),u_t^{-1}Q_tv\rangle.$$
Using \eqref{Eq:1}, we see by It\^o's formula and formula
\eqref{Eq:Comm} that
$$\d n_t(v)\mequal \frac12(\square \d\phi+\nabla_Z\d\phi)(X_t)\,Q_tv\,\d t+\d\phi(X_t)(DQ_tv)\,\d t=
-\frac\lambda2 n_t(v)\,\d t.$$ It follows that
\begin{equation}\label{Eq:Mart}
\e^{\ll t/2}\,n_t(v)  =\e^{\ll t/2} \,\<\nabla\phi(X_t), Q_tv\>,\quad t\le \tau_D,
\end{equation}
is a martingale.

\begin{lemma} Let $(\phi,\ll)\in \Eig(L)$. We keep the notation from above. 
Then, for any function
$h\in C^1([0,\infty);\R)$, the process
\begin{align}\label{Cruc_Mart}N_t(v):= h_t \,\e^{\ll t/2} \,\<\nabla\phi(X_t), Q_tv\>- \e^{\ll t/2} \,\phi(X_t) \int_0^t \<\dot h_s Q_sv, u_s\d B_s\>,\quad t\le\tau_D,
\end{align} 
is a martingale. In particular, for fixed $t>0$ and $h\in C^1([0,t];[0,1])$ 
monotone such that
$h_0=1$ and $h_t=0$, we have
\begin{align}\label{Eq:Est}
\|\nabla\phi\|_\infty &\le\  
 \|\nabla \phi\|_{\pp D,\infty}^\mstrut\,\P\{t>\tau_D\} \,\e^{(\ll+K_V)^+t/2}\notag\\ 
&\quad+\|\phi\|_\infty \,\e^{\ll t/2} \,\P\{t\leq\tau_D\}^{1/2}\,\bigg(\int_0^t |\dot h_s|^2 \e^{K_V s}
\,\d s\bigg)^{1/2}.\end{align}
\end{lemma}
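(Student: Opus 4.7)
The plan is to combine two local martingales already at our disposal into the process $N_t(v)$ and then turn the identity $\E N_{t\wedge\tau_D}(v)=\<\nabla\phi(x),v\>$ into the bound \eqref{Eq:Est} by splitting on whether the diffusion has exited $D$ before time~$t$.

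\textbf{Martingale property.} The derivation preceding \eqref{Eq:Mart} shows that $M_t(v):=\e^{\ll t/2}\<\nabla\phi(X_t),Q_tv\>$ is a local martingale on $[0,\tau_D]$. Moreover, from $L\phi=-\ll\phi$ It\^o's formula yields
$$\d\bigl(\e^{\ll t/2}\phi(X_t)\bigr)=\e^{\ll t/2}\<\nabla\phi(X_t),u_t\d B_t\>,$$
so $\e^{\ll t/2}\phi(X_t)$ is also a local martingale on $[0,\tau_D]$. The product rule gives $\d(h_tM_t(v))\mequal \dot h_tM_t(v)\,\d t$, while the It\^o product formula applied to $Y_t:=\e^{\ll t/2}\phi(X_t)\int_0^t\<\dot h_sQ_sv,u_s\d B_s\>$ together with the quadratic covariation
$$\d\bigl\<\e^{\ll\cdot/2}\phi(X_\cdot),\textstyle\int_0^\cdot\<\dot h_sQ_sv,u_s\d B_s\>\bigr\>_t=\dot h_t\,\e^{\ll t/2}\<\nabla\phi(X_t),Q_tv\>\,\d t=\dot h_tM_t(v)\,\d t$$
produces $\d Y_t\mequal \dot h_tM_t(v)\,\d t$. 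Subtracting shows that $N_t(v)=h_tM_t(v)-Y_t$ is a local martingale on $[0,\tau_D]$; since $\phi$ and $\nabla\phi$ are bounded on the compact $D$ and $|Q_s|\le \e^{K_Vs/2}$ by \eqref{VT}, the stopped process $N_{\cdot\wedge\tau_D}(v)$ is a true martingale for every deterministic $t>0$.

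\textbf{From the martingale identity to the estimate.} Optional stopping gives $\<\nabla\phi(x),v\>=N_0(v)=\E N_{t\wedge\tau_D}(v)$, and I would split on $\{\tau_D<t\}$ versus $\{\tau_D\ge t\}$. On $\{\tau_D<t\}$ the Dirichlet condition $\phi|_{\pp D}=0$ kills the second term of $N_{t\wedge\tau_D}(v)$, leaving only $h_{\tau_D}\e^{\ll\tau_D/2}\<\nabla\phi(X_{\tau_D}),Q_{\tau_D}v\>$, which is bounded via $|h_{\tau_D}|\le 1$ (using monotonicity of $h$ with $h_0=1,\ h_t=0$), $|Q_{\tau_D}v|\le \e^{K_V\tau_D/2}|v|$, and $\e^{(\ll+K_V)\tau_D/2}\le \e^{(\ll+K_V)^+t/2}$. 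On $\{\tau_D\ge t\}$ the condition $h_t=0$ kills the first term, leaving $-\e^{\ll t/2}\phi(X_t)\int_0^t\<\dot h_sQ_sv,u_s\d B_s\>$; bound $|\phi(X_t)|\le\|\phi\|_\infty$ and combine Cauchy--Schwarz with It\^o's isometry,
$$\E\Bigl[\1_{\{\tau_D\ge t\}}\Bigl|\int_0^t\<\dot h_sQ_sv,u_s\d B_s\>\Bigr|\Bigr]\le \P\{\tau_D\ge t\}^{1/2}\Bigl(\int_0^t|\dot h_s|^2\,|Q_sv|^2\,\d s\Bigr)^{1/2},$$
together with $|Q_sv|^2\le \e^{K_Vs}|v|^2$. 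Taking $|v|=1$ and then $\sup_{x\in D}$ yields \eqref{Eq:Est}.

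\textbf{Main obstacle.} The one delicate step is identifying the quadratic covariation in Step~1: the integral $\int_0^\cdot\<\dot h_sQ_sv,u_s\d B_s\>$ has $\R^d$-valued predictable integrand $\dot h_su_s^{-1}Q_sv$, which must be paired with $\e^{\ll s/2}u_s^{-1}\nabla\phi(X_s)$, the integrand of the martingale part of $\e^{\ll\cdot/2}\phi(X_\cdot)$. The inner products assemble back correctly precisely because $u_s$ is an orthonormal frame. Once this is in place the remaining steps (optional stopping, splitting on $\{\tau_D<t\}$, Cauchy--Schwarz) are essentially bookkeeping, and the monotonicity of $h$ is only invoked to ensure $h_s\in[0,1]$.
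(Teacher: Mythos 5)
Your argument is correct and follows essentially the same route as the paper: both derive martingality of $N_t(v)$ by integrating by parts in the martingale $\e^{\ll t/2}\<\nabla\phi(X_t),Q_tv\>$ from \eqref{Eq:Mart} and cancelling the resulting finite-variation term $\int_0^t\dot h_sM_s(v)\,\d s$ against the quadratic covariation arising from the product $\e^{\ll t/2}\phi(X_t)\int_0^t\<\dot h_sQ_sv,u_s\d B_s\>$, then obtain \eqref{Eq:Est} by optional stopping at $t\wedge\tau_D$, splitting on $\{\tau_D<t\}$ versus $\{\tau_D\ge t\}$, and applying Cauchy--Schwarz together with the It\^o isometry and the bound $|Q_sv|\le\e^{K_Vs/2}|v|$ from \eqref{VT}. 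The only difference is that you make the quadratic-covariation computation explicit where the paper leaves it implicit in the phrase ``we see then''; this is a presentational improvement, not a different argument.
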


\begin{proof}Indeed, from  \eqref{Eq:Mart} we deduce that
$$h_t\,\e^{\ll t/2} \,\<\nabla\phi(X_t), Q_tv\> -\int_0^t \dot h_s \,\e^{\ll s/2}\,\<\nabla\phi(X_s), Q_sv\> \,\d s,\quad t\le \tau_D,$$ is a martingale as well.
By the formula
$$\e^{\ll t/2}\, \phi(X_t) = \phi(X_0)+ \int_0^t \e^{\ll s/2}\,\langle \nabla\phi(X_s),u_s\d B_s\rangle$$ 
we see then that $N_t(v)$ is a martingale. To check inequality \eqref{Eq:Est}, 
we deduce from
the martingale property of
$\{N_{s\land\tau_D}(v)\}_{s\in [0, t]}$ that
\begin{align*}
\|\nabla\phi\|_\infty &\le\  
 \|\nabla \phi\|_{\pp D,\infty}^\mstrut\,\E\left[\1_{\{t>\tau_D\}} \,\e^{\ll \tau_D/2} \,|h_{\tau_D}| \,
|Q_{\tau_D}|  \right]\\
&\quad +\|\phi\|_{\infty}^\mstrut\,\e^{\ll t/2}\,\E\left[\1_{\{t\le \tau_D\}} 
 \sup_{|v|\leq1}\left(\int_0^t \<\dot h_s\, Q_sv, u_s\d B_s\>\right)^2\right]^{1/2}.
\end{align*}
The claim follows by using \eqref{VT}.
\end{proof}

To estimate the boundary norm $\|\nabla\phi\|_{\pp D,\infty}^\mstrut$, we shall
compare $\phi(x)$ and
$$\psi(t,x):= \P(\tau_D^x>t),\quad t>0,$$ for small $\rr^\mstrut_{\pp D}(x):= {\rm dist} (x,\pp D)$. 
Let $P_t^D$  be the Dirichlet semigroup generated by $\ff 1 2 L$.
Then $$\psi(t,x) = P_t^D \1_D (x),$$ so that \begin{equation}\label{PT}\pp_t
\psi(t,x)= \ff 1 2 L \psi(t,\newdot)(x),\quad t>0.
\end{equation}

\begin{lemma}\label{L2.2} For any $(\phi,\ll)\in\Eig(L)$,
\begin{equation}\label{ETS}\|\nabla \phi\|_{\pp D,\infty}^\mstrut \le
\|\phi\|_\infty\, \inf_{t>0} \e^{\ll t/2} \,\|\nabla
\psi(t,\newdot)\|_{\pp D,\infty}^\mstrut.
\end{equation}
\end{lemma}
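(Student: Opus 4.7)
The plan is to obtain the inequality pointwise on $\partial D$ by a maximum-principle / comparison argument, then take the supremum over $\partial D$ and the infimum over $t$.

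First I would use the eigenfunction equation together with the Dirichlet semigroup. Since $L\phi=-\ll\phi$ and $\phi|_{\partial D}=0$, and since $P_t^D$ is generated by $\tfrac12 L$, one has $P_t^D\phi=\e^{-\ll t/2}\phi$. Applying $P_t^D$ to the trivial bound $|\phi|\le \|\phi\|_\infty\,\1_D$ and using the positivity of $P_t^D$ yields
\begin{equation*}
\e^{-\ll t/2}|\phi(x)|=|P_t^D\phi(x)|\le P_t^D|\phi|(x)\le \|\phi\|_\infty\,P_t^D\1_D(x)=\|\phi\|_\infty\,\psi(t,x),
\end{equation*}
so that $|\phi(x)|\le \|\phi\|_\infty\,\e^{\ll t/2}\,\psi(t,x)$ for every $x\in D$ and every $t>0$.

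Next I would pass to the boundary. Fix $t>0$ and $x_0\in\partial D$. The two functions
\begin{equation*}
f_{\pm}(x):=\|\phi\|_\infty\,\e^{\ll t/2}\,\psi(t,x)\pm \phi(x)
\end{equation*}
are smooth up to the boundary (elliptic regularity for $\phi$ and for the Dirichlet heat kernel), nonnegative on $D$ by the previous step, and vanish on $\partial D$. Hopf's lemma (or just a one-sided directional-derivative argument using that the inward normal points into $D$) therefore gives $Nf_{\pm}(x_0)\ge 0$, where $N$ denotes the inward unit normal at $x_0$. Combining the two sign choices yields
\begin{equation*}
|N\phi(x_0)|\le \|\phi\|_\infty\,\e^{\ll t/2}\,N\psi(t,x_0).
\end{equation*}
Because both $\phi$ and $\psi(t,\newdot)$ vanish on $\partial D$, all tangential derivatives vanish on $\partial D$, hence $|\nabla\phi(x_0)|=|N\phi(x_0)|$ and likewise $|\nabla\psi(t,x_0)|=N\psi(t,x_0)$. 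This gives the desired pointwise bound on $\partial D$; taking the supremum over $x_0\in\partial D$ and the infimum over $t>0$ yields \eqref{ETS}.

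The only delicate point is the passage from the interior inequality $|\phi|\le \|\phi\|_\infty\,\e^{\ll t/2}\,\psi(t,\newdot)$ to the boundary gradient comparison, since a priori $|\phi|$ is only Lipschitz. The clean way around this is to work with $\pm\phi$ separately (both are smooth) and invoke the boundary-point argument for $f_\pm$ as above, which is why I split into the two sign choices rather than treating $|\phi|$ directly.
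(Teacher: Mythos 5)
Your proof is correct and arrives at the same key comparison as the paper, namely that for every $x\in D$ and $t>0$,
\begin{equation*}
|\phi(x)|\le\|\phi\|_\infty\,\e^{\ll t/2}\,\psi(t,x),
\end{equation*}
followed by a normal-derivative comparison at the boundary. The packaging, however, is somewhat different. You obtain the interior bound analytically, from the eigenvalue relation $P_t^D\phi=\e^{-\ll t/2}\phi$ together with positivity and monotonicity of the Dirichlet semigroup; the paper obtains the same bound probabilistically by applying optional stopping to the martingale $\phi(X^\vv_{t\wedge\tau_D^\vv})\,\e^{\ll(t\wedge\tau_D^\vv)/2}$ and using that $\phi$ vanishes on $\partial D$. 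These are two formulations of the same fact, but yours is arguably more elementary since it does not need the stochastic representation. At the boundary, you invoke Hopf's lemma (or a one-sided directional derivative argument), whereas the paper simply evaluates the limit $\lim_{\vv\downarrow0}|\phi(\exp_x(\vv N))|/\vv$ directly; Hopf's lemma is more than you need here, since all you require is the trivial observation that a nonnegative $C^1$ function vanishing at a boundary point has nonnegative inward normal derivative there. Both routes are valid, and your analytic version has the small advantage of giving the interior comparison on all of $D$ in one stroke before specializing to the boundary.
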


\begin{proof} To prove \eqref{ETS}, we fix $x\in\pp D$. For
small $\vv>0$, let $x^\vv= \exp_x(\vv N)$, where $N$ is the inward
unit normal vector field of $\pp D$. Since $\phi|_{\pp D}=0$ and
$\psi(t,\newdot)|_{\pp D}=0$, we have \begin{equation}\label{MG}|\nabla \phi(x)|= |N
\phi(x)|=\lim_{\vv\to 0} \ff{|\phi(x^\vv)|}\vv,\quad
|\nabla\psi(t,\newdot)(x)|= \lim_{\vv\to 0}
\ff{|\psi(t,x^\vv)|}\vv.\end{equation}
Let $X_t^\vv$ be the $L$-diffusion starting at $x^\vv$ and
$\tau_D^\vv$ its first hitting time of $\partial D$. Note that
$$N_t:=\phi(X_{t\wedge\tau_D^\vv}^\vv)\,\e^{\lambda (t\wedge\tau_D^\vv)/2},\quad t\geq0,$$
is a martingale.
Thus, for each fixed $t>0$, we can estimate as follows:
\begin{align*}
|\nabla \phi(x)|
&= \lim_{\vv\to 0} \ff{|\phi(x^\vv)|}\vv \\
 &= \lim_{\vv\to 0} \frac {\left|\E[\phi(X_t^\vv)\, \1_{\{t<\tau_D^\vv\}} ]\,
 \e^{\lambda (t\wedge\tau_D^\vv)/2}\right|}{\vv}\\
&\leq \|\phi\|_\infty\, \e^{\ll t/2} \lim_{\vv\to 0}
\frac {\E[\1_{\{t<\tau_D^\vv\}} ]}{\vv}\\
&\leq \|\phi\|_\infty\, \e^{\ll t/2}
\lim_{\vv\to 0}
\ff{\psi(t,x^\vv)}\vv\\
&=\|\phi\|_\infty\, \e^{\ll t/2} \,|\nabla \psi(t,\newdot)|(x).
\end{align*}
Taking the infimum over $t$ gives the claim.
\end{proof}

We now work out an explicit estimate for $\|\nabla \psi(t,\newdot)\|_{\partial D,\infty}$. 
Let $\cut(D)$ be the
cut-locus of $\pp D$, which is a zero-volume closed subset of $D$ such
that $\rho^\mstrut_{\pp D}:=\dist(\newdot, \pp D)$ is smooth in
$D\setminus \cut(D)$.

\begin{proposition}\label{P6.3}
Let $\alpha\in\R$ such that 
\begin{equation}\label{Eq:alpha}\f12 L\rho_{\partial_D}\le \alpha.
\end{equation} Then
\begin{align}
\|\n \psi(t,\newdot)\|_{\pp D,\infty}^\mstrut
&\le\a+\f{\sqrt2}{\sqrt{\pi t}}+\int_0^t\f{1-\e^{-\f{\a^2s}{2}}}{\sqrt{2\pi s^3}}\,\d s\notag\\
&\le \alpha+\ff{\ss 2}{\ss{\pi t}}e^{-\f{\alpha^2t}{2}} +\min\bigg\{ |\a|,\ \ff{\alpha^2\ss{2t}}{\ss{\pi}}\bigg\}, \label{E6.5.0}
\end{align}
and
\begin{equation}\label{E6.5.0bis}
\|\n \psi(t,\newdot)\|_{\pp D,\infty}^\mstrut\le \frac{\sqrt{2}}{\sqrt{ \pi t}}+\alpha+\frac{\sqrt{t}}{\sqrt{2 \pi}}\alpha^2
\end{equation}
\end{proposition}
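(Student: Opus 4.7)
The plan is to compare $\psi(t,x)=\P(\tau_D^x>t)$ with the corresponding survival probability for a one-dimensional Brownian motion with drift $\alpha$ killed at $0$, and then to read off the normal derivative at the boundary from an explicit formula.

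Concretely, let $\hat\psi(t,r):=\P_r(\hat\tau>t)$ where $\hat\rho_s=r+B_s+\alpha s$ and $\hat\tau=\inf\{s\geq 0:\hat\rho_s=0\}$. Standard reflection-principle-with-Girsanov computations give the explicit formula
\begin{equation*}
\hat\psi(t,r)=\Phi\!\left(\frac{r+\alpha t}{\sqrt t}\right)-\e^{-2\alpha r}\Phi\!\left(\frac{-r+\alpha t}{\sqrt t}\right),
\end{equation*}
so that $\hat\psi(t,0)=0$, $\hat\psi_r(t,\newdot)\ge 0$ and
\begin{equation*}
\hat\psi_r(t,0)=\frac{\sqrt2}{\sqrt{\pi t}}\,\e^{-\alpha^2 t/2}+2\alpha\,\Phi(\alpha\sqrt t).
\end{equation*}
The function $\hat\psi$ satisfies the parabolic equation $\partial_t\hat\psi=\tfrac12\partial_r^2\hat\psi+\alpha\,\partial_r\hat\psi$ on $(0,\infty)\times(0,\infty)$, with $\hat\psi(0,r)=1$ for $r>0$.

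\medskip

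The central step is the comparison $\psi(t,x)\le\hat\psi\big(t,\rho_{\partial D}(x)\big)$. I would establish this by showing that $M_s:=\hat\psi\big(t-s,\rho_{\partial D}(X_s)\big)$ is a supermartingale on $[0,t\wedge\tau_D]$. Applying It\^o's formula (using that $\rho_{\partial D}$ has unit-length gradient where it is smooth and that $\tfrac12 L\rho_{\partial D}\le\alpha$ by hypothesis \eqref{Eq:alpha}) gives
\begin{equation*}
\d M_s\mequal \hat\psi_r\!\left(t-s,\rho_{\partial D}(X_s)\right)\bigl(\tfrac12 L\rho_{\partial D}-\alpha\bigr)(X_s)\,\d s-\hat\psi_r(\cdots)\,\d L_s,
\end{equation*}
where $L_s$ is a non-decreasing local time supported on the cut locus of $\partial D$ (treated by the usual Kendall--Cranston approximation, since $\rho_{\partial D}$ is concave there in the support sense). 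Both drift terms are non-positive since $\hat\psi_r\ge 0$, so $M_s$ is a supermartingale. Taking expectations at $s=t\wedge\tau_D$ and using $\hat\psi(t-\tau_D,0)=0$ and $\hat\psi(0,\rho_{\partial D}(X_t))=1$ yields $\psi(t,x)=\E[M_{t\wedge\tau_D}]\le M_0=\hat\psi(t,\rho_{\partial D}(x))$, as desired.

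\medskip

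Next, for $x\in\partial D$ both $\psi(t,\newdot)$ and $\hat\psi(t,\rho_{\partial D}(\newdot))$ vanish at $x$. Since $\rho_{\partial D}$ is smooth in a one-sided tubular neighbourhood of $\partial D$ with $\rho_{\partial D}(x^\vv)=\vv$, taking the one-sided directional derivative along the inward normal gives
\begin{equation*}
|\nabla\psi(t,x)|=|N\psi(t,x)|=\lim_{\vv\downarrow 0}\frac{\psi(t,x^\vv)}{\vv}\le\lim_{\vv\downarrow 0}\frac{\hat\psi(t,\vv)}{\vv}=\hat\psi_r(t,0),
\end{equation*}
which combined with the explicit formula above gives the first form of \eqref{E6.5.0} after rewriting $2\alpha\Phi(\alpha\sqrt t)=\alpha+2\alpha\!\int_0^{\alpha\sqrt t}\!\varphi(u)\,\d u$ and performing the change of variable $u=\alpha\sqrt s$ followed by an integration by parts to produce the $\int_0^t(1-\e^{-\alpha^2 s/2})/\sqrt{2\pi s^3}\,\d s$ form.

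\medskip

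The remaining inequalities are then elementary bounds on this explicit expression: for the second line of \eqref{E6.5.0} I would split $2\alpha\int_0^{\alpha\sqrt t}\varphi(u)\,\d u$ and use either $\int_0^\infty\varphi=1/2$ to bound it by $|\alpha|$, or $\varphi\le 1/\sqrt{2\pi}$ to bound it by $\alpha^2\sqrt{2t}/\sqrt\pi$, taking the minimum; while \eqref{E6.5.0bis} comes at once from $1-\e^{-\alpha^2 s/2}\le \alpha^2 s/2$ applied inside the integral. The main obstacle is the first (comparison) step: justifying the It\^o expansion of $\hat\psi(t-s,\rho_{\partial D}(X_s))$ across the cut locus of $\partial D$, which requires either a standard local-time regularisation argument or a smoothing of $\rho_{\partial D}$ together with the fact that $\hat\psi_r\ge 0$ makes the sign work in our favour; once that is in place, the rest is straightforward calculation.
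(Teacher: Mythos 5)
Your argument is correct and arrives at exactly the right constant, but it takes a genuinely different route from the paper's on both substeps. For the comparison step, the paper invokes Kendall's semimartingale decomposition $\rho_{\partial D}(X_t)=\rho_{\partial D}(x)+b_t+\tfrac12\int_0^t L\rho_{\partial D}(X_s)\,\d s-l_t$ (with $l$ non-decreasing, supported on the cut locus) and the hypothesis $\tfrac12 L\rho_{\partial D}\le\alpha$ to get the \emph{pathwise} bound $\rho_{\partial D}(X_t)\le Y_t^\alpha:=\rho_{\partial D}(x)+b_t+\alpha t$, hence $\tau_D\le T^\alpha$ pathwise and $\psi(t,x)\le\P(t<T^\alpha(\rho_{\partial D}(x)))$ in one line, with no need for the PDE for $\hat\psi$, no need for monotonicity $\hat\psi_r\ge0$, and no separate discussion of the sign of the local-time term. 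Your supermartingale argument reaches the same comparison but must in addition verify the parabolic PDE for $\hat\psi$, the monotonicity $\hat\psi_r\ge 0$, and that the cut-locus contribution has the favourable sign — all true, and all justified once you feed Kendall's decomposition for $\rho_{\partial D}(X_s)$ into It\^o's formula for the composition, but it is a longer chain. Conversely, for extracting the boundary normal derivative your route is cleaner: you differentiate the closed form $\hat\psi(t,r)=\Phi\big(\tfrac{r+\alpha t}{\sqrt t}\big)-\e^{-2\alpha r}\Phi\big(\tfrac{-r+\alpha t}{\sqrt t}\big)$ at $r=0$ to get $\hat\psi_r(t,0)=\tfrac{\sqrt2}{\sqrt{\pi t}}\e^{-\alpha^2t/2}+2\alpha\Phi(\alpha\sqrt t)$ directly, whereas the paper works with the sub-probability density $f_{\alpha,\vv}$ of $T^\alpha(\vv)$ and performs a first-order expansion of $\int_0^t f_{\alpha,\vv}(s)\,\d s$ as $\vv\downarrow 0$, which requires some care (splitting off the tail, monotone convergence, $\e^{-\alpha\vv}=1-\alpha\vv+{\rm o}(\vv)$). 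Your identification $2\alpha\Phi(\alpha\sqrt t)=\alpha+|\alpha|\sqrt{2t/\pi}\int_0^{|\alpha|}\e^{-s^2t/2}\,\d s$ matches the paper's expression, and the elementary bounds you describe for the second line of \eqref{E6.5.0} and for \eqref{E6.5.0bis} are exactly the paper's. In short: same destination, interchangeable techniques, with the paper's pathwise comparison being more elementary for step one and your explicit-formula differentiation being more elementary for step two.
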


Notice that by \cite[Lemma 2.3]{W04} the condition $\ff 1 2 L\rr_{\pp D}\le \alpha$ holds for $\alpha$ defined by \eqref{AA}.

\begin{proof} Let $x\in D$ and let $X_t$ solve SDE \eqref{E1}.
As shown in \cite{Kendall}, $(\rr^\mstrut_{\pp D}(X_t))_{t\le\tau_D}$ is a semimartingale satisfying
\begin{equation}
 \label{E6.5.0.0}
 \rho^\mstrut_{\partial D}(X_t)=\rho^\mstrut_{\partial D}(x)+b_t+\f12\int_0^t L\rho^\mstrut_{\partial D}(X_s)\,\d s -l_t,\quad t\le \tau_D,
\end{equation}
where $b_t$ is a real-valued Brownian motion starting at $0$, and $l_t$ a non-decreasing process which increases only when $X_t^x\in \cut(D)$.
Setting $\vv=\rho^\mstrut_{\partial D}(x)$, we deduce from  \eqref{E6.5.0.0} together with $\ff 1 2 L\rr_{\pp D}\le \alpha,$ that
\begin{equation}
 \label{E6.5.0.1}
 \rho^\mstrut_{\partial D}(X_t(x))\le Y^{\a}_t(\vv):=\vv+b_t+\a t,\quad t\le\tau_D.
\end{equation}
  Consequently, letting $T^\a(\vv)$ be the first hitting time of $0$ by $Y^\a_t(\vv)$, we obtain
\begin{equation}
 \label{E6.5.0.2}
 \psi(t,x)\le \P(t<T^\a(\vv)).
\end{equation}
  On the other hand, since $\psi(t,\newdot)$ vanishes
on the boundary and is positive in $D$, we have for all $y\in \partial D$
\begin{equation}
 \label{E6.5.0.4}
 |\n \psi(t,y)|=\lim_{x\in D,\, x\to y}\f{\psi(t,x)}{\rho^\mstrut_{\partial D}(x)}.
\end{equation}
Hence, by \eqref{E6.5.0.2}, to prove the first inequality in \eqref{E6.5.0} it is enough to establish that
\begin{equation}
 \label{E6.5.0.3}
 \limsup_{\vv\downarrow 0} \ff{\P(t<T^\a(\vv))}\vv\le\a+\f{\sqrt2}{\sqrt{\pi t}}+\int_0^t\f{1-\e^{-\f{\a^2s}{2}}}{\sqrt{2\pi s^3}}\,\d s.
\end{equation}
It is well known that the (sub-probability) density $f_{\a,\vv}$ of  $T^{\a}(\vv)$ is
\begin{equation}
\label{E6.5.1}
f_{\a,\vv}(s)=\f{\vv\exp\big({-(\vv+\a s)^2}/{(2s)}\big)}{\sqrt{2\pi s^3}},
\end{equation}
which can be obtained by the reflection principle for $\a=0$ and the Girsanov transform for $\a\ne 0$.
Thus
\begin{equation}
\label{E6.5.2}\begin{split}
\P(t\ge T^{\a}(\vv))
&=\vv\int_0^t \f{\exp\big({-(\vv+\a s)^2}/{(2s)}\big)}{\sqrt{2\pi s^3}}\,\d s \\
&=\vv\exp(-\a \vv)\int_0^t\f{\e^{-{\a^2 s}/{2}}}{\sqrt{2\pi s^3}}\exp\left(-\f{\vv^2}{2s}\right)\,\d s\\
  &=\exp(-\a \vv)\int_0^{2t/\vv^2}\f{\e^{-1/r}}{\sqrt{\pi r^3}}\exp\left(-\f{\a^2\vv^2r}{4}\right)\,\d r,
 \end{split}
\end{equation}
where we have made the change of variable $r=2s/\vv^2$. With the change of variable $v=1/r$ we easily check that
\begin{equation}
 \label{E6.5.2.2}
 \int_0^\infty r^{-3/2}\e^{-1/r}\,\d r=\Gamma(1/2)=\sqrt{\pi},
\end{equation}
and this allows to write
\begin{equation}
 \label{E6.5.2.3}
 \P(t\ge T^{\a}(\vv))=\exp(-\a \vv)\left(1-\int_{2t/\vv^2}^\infty\f{\e^{-1/r}}{\sqrt{\pi r^3}}\,\d r-
 \int_0^{2t/\vv^2}\f{\e^{-1/r}}{\sqrt{\pi r^3}}\left(1-\e^{{-\a^2 \vv^2r}/{4}}\right)\,\d r\right).
\end{equation}
As $\vv\to 0$,
\begin{align*}
 \int_{2t/\vv^2}^\infty\f{\e^{-1/r}}{\sqrt{ r^3}}\,\d r= \int_{2t/\vv^2}^\infty\f{1}{\sqrt{ r^3}}\,\d r +
{\rm o}(\vv)=\f{\vv\sqrt2}{\sqrt{t}}+{\rm o}(\vv),
\end{align*}
and with change of variable $s=\ff 1 2\vv^2 r$
\begin{align*}
 \int_0^{2t/\vv^2}\f{\e^{-1/r}}{\sqrt{\pi r^3}}\left(1-\e^{-\f{\a^2 \vv^2r}{4}}\right)\,\d r&=
 \vv\int_0^t\f{\e^{-\f{\vv^2}{2s}}}{\sqrt{2\pi s^3}}\left(1-\e^{-\f{\a^2 s}{2}}\right)\,\d s\\
 &= \vv\int_0^t\f{1-\e^{-\f{\a^2 s}{2}}}{\sqrt{2\pi s^3}}\,\d s +{\rm o}(\vv)
\end{align*}
by monotone convergence.
Combining these with $\e^{-\alpha\vv}=1-\alpha\vv +{\rm o}(\vv)$, we deduce from
\eqref{E6.5.2.3} that
\begin{equation}
 \label{E6.5.3}
 \P(t\ge T^{\a}(\vv))=1-\vv\left(\a+\f{\sqrt2}{\sqrt{\pi t}}+\int_0^t\f{1-\e^{-\f{\a^2 s}{2}}}{\sqrt{2\pi s^3}}\,ds\right)+{\rm o}(\vv)
\end{equation}
which yields~\eqref{E6.5.0.3}.

Next, an integration by parts yields
\begin{equation}\label{B*1} \int_0^t\f{1-\e^{-\f{\a^2 s}{2}}}{\sqrt{2\pi s^3}}\,ds=\frac{\alpha^2}{\sqrt{ 2\pi }}\int_0^t\frac1{\sqrt{u}}\,\e^{-\frac{\alpha^2 u}{2}}\, \d u-\frac{\sqrt{2}}{\sqrt{ \pi t}}\left(1-\e^{-\frac{\alpha^2 t}{2}}\right).
\end{equation}
With the change of variable $\di s=|\alpha|\ss{\ff{u}{t}}$ in the first term in the right we obtain
\begin{equation}\label{B*2} \frac{\alpha^2}{\sqrt{ 2\pi }}\int_0^t\frac1{\sqrt{u}}\,\e^{-\frac{\alpha^2 u}{2}}\, \d u
=|\alpha|\ss{\ff{2t}{\pi}}\int_0^{|\alpha|}\e^{-\ff{s^2t}{2}}\,\d s.
\end{equation}
We arrive at
\begin{equation}
  f(\alpha):=\a+\f{\sqrt2}{\sqrt{\pi t}}+\int_0^t\f{1-\e^{-\f{\a^2 s}{2}}}{\sqrt{2\pi s^3}}\,\d s=
  \frac{\sqrt{2}}{\sqrt{ \pi t}}\e^{-\frac{\alpha^2 t}{2}}+\alpha+|\alpha|\ss{\ff{2t}{\pi}}\int_0^{|\alpha|}\e^{-\ff{s^2t}{2}}\,\d s.
\end{equation}
Bounding $\di \ss{\ff{2t}{\pi}}\int_0^{|\alpha|}\e^{-\ff{s^2t}{2}}\,\d s$ by $\di \ss{\ff{2t}{\pi}}\int_0^{\infty}\e^{-\ff{s^2t}{2}}\,\d s=1$, respectively bounding $\e^{-\ff{s^2t}{2}}$ by $1$ in the integral, yields~\eqref{E6.5.0}.

  The function
$$
f(\alpha)=
  \frac{\sqrt{2}}{\sqrt{ \pi t}}\e^{-\frac{\alpha^2 t}{2}}+\alpha+|\alpha|\ss{\ff{2t}{\pi}}\int_0^{|\alpha|}\e^{-\ff{s^2t}{2}}\,\d s
$$ is smooth and an easy computation shows that
\begin{equation}\label{fTE}
f(0)=\frac{\sqrt{2}}{\sqrt{ \pi t}},\quad f'(0)=1,\quad f''(\alpha)=\frac{\sqrt{2t}}{\sqrt{ \pi}}\e^{-\ff{\alpha^2t}{2}}
\end{equation}
Using the fact that $f(\alpha)-\alpha$ is even, we also get
\begin{equation}\label{fTE2}
f(\alpha)=\frac{\sqrt{2}}{\sqrt{ \pi t}}+\alpha+\int_0^{|\alpha|}\frac{\sqrt{2t}}{\sqrt{ \pi}}\e^{-\ff{s^2t}{2}}s\,ds\le
\frac{\sqrt{2}}{\sqrt{ \pi t}}+\alpha+\frac{\sqrt{t}}{\sqrt{2 \pi}}\alpha^2.
\end{equation}
  which yields~\eqref{E6.5.0bis}.
  \end{proof}

\begin{remark}
One could use estimate \eqref{Eq:Est} (optimizing the right-hand side with respect to 
$t$) together with Lemma \ref{L2.2} (again optimizing with respect to 
$t$)  to estimate  $\|\nabla\phi\|_\infty^\mstrut$ in terms of $\|\phi\|_\infty^\mstrut$. 
We prefer to combine the two steps. 
\end{remark}

\begin{lemma}\label{L2.1a} Assume $\Ric_D^V\ge -K_V$ for some constant
$K_V\in\R$. Let $\alpha$ be determined by \eqref{Eq:alpha}.
\begin{enumerate}[\rm(a)]
\item If $\alpha\ge0$, then for any $(\phi,\ll)\in \Eig(L)$,
  \begin{equation*}
    \|\nabla\phi\|_\infty \le \inf_{t>0}\max_{\vv\in [0,1]}\e^{\frac{(\ll+K_V^+)t}2}\, \left\{\vv\, \left(
        \alpha+\ff{\ss 2}{\ss{\pi t}}e^{-\f{\alpha^2t}{2}} +\min\bigg( |\a|,\ \ff{\alpha^2\ss{2t}}{\ss{\pi}}\bigg)
      \right)+\ss{\frac{1-\vv}{t}}\right\}\|\phi\|_{\infty}^\mstrut,\end{equation*}
  as well as
  \begin{equation*}
    \|\nabla\phi\|_\infty \le \inf_{t>0}\max_{\vv\in [0,1]}
    \e^{(\ll+K_V^+)t/2} \,\left\{\vv\, \left(
        \alpha+\sqrt{\frac2{\pi t}} +\frac{\sqrt{t}}{\sqrt{2 \pi}}\alpha^2
      \right)+\ss{\frac{1-\vv}{t}}\right\}\|\phi\|_{\infty}^\mstrut\end{equation*}
  and
  \begin{equation*}
    \|\nabla\phi\|_\infty \le \inf_{t>0}\max_{\vv\in [0,1]}
    \e^{(\ll+K_V^+)t/2} \,\left\{\vv\, \left(
        2\alpha+\sqrt{\frac2{\pi t}} 
      \right)+\ss{\frac{1-\vv}{t}}\right\}\|\phi\|_{\infty}^\mstrut.
  \end{equation*}

\item If $\alpha\le0$, then \begin{equation*} \|\nabla\phi\|_\infty
    \le \inf_{t>0}\max_{\vv\in [0,1]} \e^{(\ll+K_V^+)t/2}
    \,\left\{\vv\, \sqrt{\frac2{\pi
          t}}\,e^{-\f{\alpha^2t}{2}}+\ss{\frac{1-\vv}{t}}\right\}\|\phi\|_{\infty}^\mstrut.
  \end{equation*}
  In particular,
  \begin{equation*}
    \|\nabla\phi\|_\infty \le \inf_{t>0}\max_{\vv\in [0,1]}
    \e^{(\ll+K_V^+)t/2} \,\left\{\vv\, 
      \sqrt{\frac2{\pi t}}+\ss{\frac{1-\vv}{t}}\right\}\|\phi\|_{\infty}^\mstrut.
  \end{equation*}
\end{enumerate}
\end{lemma}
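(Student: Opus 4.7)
The plan is to specialise estimate~\eqref{Eq:Est} to the linear profile $h_s=1-s/t$, for which $h_0=1$, $h_t=0$, $\dot h_s\equiv -1/t$, so that
\[
\int_0^t|\dot h_s|^2\e^{K_V s}\,\d s=\frac{\e^{K_V t}-1}{K_V t^2}\le\frac{\e^{K_V^+ t}}{t}
\]
(with $\lim_{K_V\to 0}$ at $K_V=0$). Applying the elementary bound $(\ll+K_V)^+\le \ll+K_V^+$ to pull out a single exponential prefactor, \eqref{Eq:Est} rewrites as
\[
\|\n\phi\|_\infty \le \e^{(\ll+K_V^+)t/2}\Bigl\{\vv\,\|\n\phi\|_{\pp D,\infty}^\mstrut + \|\phi\|_\infty\,\ss{(1-\vv)/t}\,\Bigr\},
\]
where $\vv:=\P\{t>\tau_D\}$ is evaluated at a point that realises $\|\n\phi\|_\infty$ and hence lies in $[0,1]$.

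I would then bound $\|\n\phi\|_{\pp D,\infty}^\mstrut$ in terms of $\|\phi\|_\infty$ by combining Lemma~\ref{L2.2} with the different forms of the bound supplied by Proposition~\ref{P6.3}, coupling the two steps at the same time $t$ as indicated by the remark preceding the statement. For part~(a) with $\alpha\ge 0$, the boundary factor $G(t)$ bounding $\|\n\psi(t,\newdot)\|_{\pp D,\infty}^\mstrut$ is chosen in three successive forms: the sharp estimate~\eqref{E6.5.0} produces the first displayed inequality; its quadratic majorant~\eqref{E6.5.0bis} produces the second; and the cruder $G(t)=2\alpha+\ss{2/(\pi t)}$, obtained by dropping the Gaussian factor $\e^{-\alpha^2t/2}\le 1$ and bounding the minimum by $|\alpha|=\alpha$, produces the third. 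For part~(b) with $\alpha\le 0$, the linear term $\alpha$ and the non-negative minimum term in~\eqref{E6.5.0} are non-positive (or redundant) and may be discarded, leaving $G(t)=\ss{2/(\pi t)}\,\e^{-\alpha^2t/2}$; a further majoration $\e^{-\alpha^2t/2}\le 1$ then yields the ``in particular'' inequality.

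The main technical obstacle lies in fusing the boundary and interior estimates without accumulating exponential factors: a naive successive application of Lemma~\ref{L2.2} and~\eqref{Eq:Est} would multiply together the $\e^{\ll t/2}$ coming from Lemma~\ref{L2.2} and the $\e^{(\ll+K_V)^+t/2}$ coming from the boundary term of~\eqref{Eq:Est}, producing a prefactor strictly larger than the claimed $\e^{(\ll+K_V^+)t/2}$. This is precisely the reason for combining the two steps simultaneously rather than iteratively, as signalled in the remark. Granted the combined inequality, the conclusion is routine: since $\vv\in[0,1]$ is a priori undetermined (being the probability $\P\{t>\tau_D\}$ at an unknown extremising point), one maximises the concave-in-$\vv$ bracket over $\vv\in[0,1]$ and finally takes the infimum over $t>0$.
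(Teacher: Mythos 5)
The proposal correctly identifies the technical obstacle — that a na\"ive chaining of \eqref{Eq:Est} and Lemma~\ref{L2.2} produces the extraneous prefactor $\e^{\ll t/2}\cdot\e^{(\ll+K_V)^+t/2}$ — but it never actually resolves it, and the phrase ``coupling the two steps at the same time $t$'' reflects a misreading of the remark. Applying Lemma~\ref{L2.2} at the deterministic time $t$ cannot avoid the extra exponential, no matter how one splits the time budget; once you have passed to the packaged inequality \eqref{Eq:Est}, the information about \emph{when} the trajectory reached the boundary has been integrated out and the telescoping you need is no longer available.

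The missing idea is to return to the martingale identity
\[
|\nabla_v\phi(x)|=\Big|\,\E\Big[\1_{\{t>\tau_D\}}\,\e^{\ll\tau_D/2}\,h_{\tau_D}\,\langle\nabla\phi(X_{\tau_D}),Q_{\tau_D}v\rangle\Big]-\E\Big[\1_{\{t\le\tau_D\}}\,\e^{\ll t/2}\phi(X_t)\!\int_0^t\!\langle\dot h_s Q_sv,u_s\d B_s\rangle\Big]\Big|
\]
and, inside the first expectation, to apply Lemma~\ref{L2.2} and Proposition~\ref{P6.3} at the \emph{random residual time} $t-\tau_D$ rather than at $t$. On $\{t>\tau_D\}$ this bounds $|\nabla\phi(X_{\tau_D})|\le\|\phi\|_\infty\,\e^{\ll(t-\tau_D)/2}\,\|\nabla\psi(t-\tau_D,\newdot)\|_{\pp D,\infty}$, so the factor $\e^{\ll\tau_D/2}$ already present combines with $\e^{\ll(t-\tau_D)/2}$ to give exactly $\e^{\ll t/2}$, and $\e^{K_V\tau_D/2}$ from $|Q_{\tau_D}|$ is absorbed into $\e^{K_V^+ t/2}$. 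Moreover the surviving factor $h_{\tau_D}=\tfrac{t-\tau_D}{t}$ tames the $1/\sqrt{t-\tau_D}$ singularity of $\|\nabla\psi(t-\tau_D,\newdot)\|_{\pp D,\infty}$ as $\tau_D\uparrow t$, and one then needs the pointwise monotonicity bound
\[
\frac{t-\tau_D}{t}\,\|\nabla\psi(t-\tau_D,\newdot)\|_{\pp D,\infty}\;\le\;\a+\frac{\sqrt2}{\sqrt{\pi t}}+\int_0^t\frac{1-\e^{-\a^2s/2}}{\sqrt{2\pi s^3}}\,\d s
\]
(uniform in $\tau_D\in[0,t]$), which is what lets the expectation collapse to the factor $\vv=\P\{t>\tau_D\}$. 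Your proposal states that ``granted the combined inequality, the conclusion is routine'' — but that combined inequality \emph{is} the proof, and it cannot be obtained from \eqref{Eq:Est} plus Lemma~\ref{L2.2} as black boxes; you must go one step upstream to the martingale itself.
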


\begin{proof} 
For fixed $t>0$ in \eqref{Cruc_Mart}, we take $h\in C^1([0,t];[0,1])$ such that
$h_0=1$ and $h_t=0$. Then, by the martingale property of
$\{N_{s\land\tau_D}(v)\}_{s\in [0, t]}$, we obtain
\begin{align}
  &|\nabla_v\phi|(x) = |N_0(v)|= |\E N_{t\land\tau_D}(v)| \notag \\
  &= \left| \E\left[\1_{\{t>\tau_D\}} \,\e^{\ll \tau_D/2} \,h_{\tau_D} \<\nabla
    \phi(X_{\tau_D}),Q_{\tau_D}v\> - \1_{\{t\le \tau_D\}} \e^{\ll t/2}
  \phi(X_t) \int_0^t \<\dot h_s\, Q_sv, u_s\d B_s\>\right]\right|.\label{Est}
\end{align}
Note that using \eqref{VT} along with Lemma \ref{L2.2} we may estimate
\begin{align*}
&\left| \E\left[\1_{\{t>\tau_D\}} \,\e^{\ll \tau_D/2} \,h_{\tau_D} \<\nabla
    \phi(X_{\tau_D}),Q_{\tau_D}v\> \right]\right|\\
&\qquad\leq
\E\left[\1_{\{t>\tau_D\}} \,\e^{\ll \tau_D/2} \,|h_{\tau_D}| \, \|\nabla \phi\|_{\pp D,\infty}^\mstrut\,
\e^{K_V\tau_D/2}|v|  \right]\\
&\qquad\leq
\E\left[\1_{\{t>\tau_D\}} \,\e^{\ll \tau_D/2} \,|h_{\tau_D}|\,   \|\phi\|_{\infty}^\mstrut  \,\|\nabla \psi(t-\tau_D,\newdot)\|_{\pp D,\infty}^\mstrut\,
\e^{\ll (t-\tau_D)/2}\,\e^{K_V\tau_D/2}\,|v|  \right]\\
&\qquad=
\E\left[\1_{\{t>\tau_D\}} \,|h_{\tau_D}|\,   \|\phi\|_{\infty}^\mstrut  \,\|\nabla \psi(t-\tau_D,\newdot)\|_{\pp D,\infty}^\mstrut\,
\e^{\ll t/2}\,\e^{K_V\tau_D/2}\,|v|  \right]\\
&\qquad\leq \e^{(\ll+K_V^+)t/2}\,   \|\phi\|_{\infty}^\mstrut\,  \E\left[\1_{\{t>\tau_D\}} \,|h_{\tau_D}|\,\|\nabla \psi(t-\tau_D,\newdot)\|_{\pp D,\infty}^\mstrut\,|v|  \right],
\end{align*}
as well as
\begin{align*}\E\left[\1_{\{t\le \tau_D\}} \,\e^{\ll t/2}
  \phi(X_t) \int_0^t \<\dot h_s\, Q_sv, u_s\d B_s\>\right]
\leq\e^{\ll t/2} \,\|\phi\|_\infty \,\P\{t\leq\tau_D\}^{1/2}\,\bigg(\int_0^t |\dot h_s|^2 \e^{K_V s}
\,\d s\bigg)^{1/2}.
\end{align*}
Taking
$$h_s= \ff{t-s}{t},\quad s\in [0,t],$$ 
we obtain thus from \eqref{Est} 
\begin{align*}|\nabla \phi(x)| &\le \frac{\e^{(\ll+K_V^+)t/2}}t\,\|\phi\|_{\infty}^\mstrut \,\E\left[\1_{\{t>\tau_D\}} \,(t-\tau_D)\,\|\nabla \psi(t-\tau_D,\newdot)\|_{\pp D,\infty}^\mstrut  \right]\\
&\quad+\e^{\ll t/2} \,\|\phi\|_\infty \,\P\{t\leq\tau_D\}^{1/2}\,\frac1t\bigg(\frac{\e^{K_V^+ t}-1}
{K_V^+}\bigg)^{1/2}.\end{align*} 
Note that
$$\frac{\e^{K_V^+ t}-1}{K_V^+}\leq t\e^{K_V^+ t}.$$
(i) By \eqref{E6.5.0}, assuming that $\alpha\ge0$, we have on $\{t>\tau_D\}$:
\begin{align*}
\frac{t-\tau_D}t\,\|\n \psi(t-\tau_D,\newdot)\|_{\pp D,\infty}^\mstrut&\le 
\a\frac{t-\tau_D}t+\f{\sqrt2}{\sqrt{\pi}}\frac{\sqrt{t-\tau_D}}t
+\frac{t-\tau_D}t\int_0^{t-\tau_D}\f{1-\e^{-\f{\a^2s}{2}}}{\sqrt{2\pi s^3}}\,\d s\\
&\le 
\a+\f{\sqrt2}{\sqrt{\pi t}}+\int_0^{t}\f{1-\e^{-\f{\a^2s}{2}}}{\sqrt{2\pi s^3}}\,\d s\\
&\le \alpha+\ff{\ss 2}{\ss{\pi t}}e^{-\f{\alpha^2t}{2}} +\min\bigg\{ \a,\ \ff{\alpha^2\ss{2t}}{\ss{\pi}}\bigg\}. 
\end{align*}
Thus, letting $\vv=\P(t>\tau_D)$, we obtain
\begin{align*}|\nabla \phi(x)| \le \e^{(\ll+K_V^+)t/2}\,\|\phi\|_{\infty}^\mstrut \,\left[\vv\, \left(
\alpha+\ff{\ss 2}{\ss{\pi t}}e^{-\f{\alpha^2t}{2}} +\min\bigg\{ \a,\ \ff{\alpha^2\ss{2t}}{\ss{\pi}}\bigg\}
\right)+\ss{\frac{1-\vv}{t}}\right].
\end{align*}
(ii) Still under the assumption $\alpha\ge0$, this time using estimate \eqref{E6.5.0bis}, 
we have on $\{t>\tau_D\}$:
\begin{equation*}
\|\n \psi(t-\tau_D,\newdot)\|_{\pp D,\infty}^\mstrut\le \frac{\sqrt{2}}{\sqrt{ \pi (t-\tau_D)}}
+\alpha+\frac{\sqrt{t-\tau_D}}{\sqrt{2 \pi}}\alpha^2,
\end{equation*}
and thus letting $\vv=\P(t>\tau_D)$, we get
\begin{align*}|\nabla \phi(x)| &\le \frac{\e^{(\ll+K_V^+)t/2}}t\,\|\phi\|_{\infty}^\mstrut \,\E\left[\1_{\{t>\tau_D\}} \left(
\sqrt{\frac2{\pi}} \sqrt{t-\tau_D}+\alpha(t-\tau_D)+\frac{(t-\tau_D)^{3/2}}{\sqrt{2 \pi}}\alpha^2
\right)\right]\\
&\quad+\e^{\ll t/2} \,\|\phi\|_\infty \,\P\{t\leq\tau_D\}^{1/2}\,\frac1t\bigg(\frac{\e^{K_V^+ t}-1}
{K_V^+}\bigg)^{1/2}\\
&\le \e^{(\ll+K_V^+)t/2}\,\|\phi\|_{\infty}^\mstrut \,\left[\vv\, \left(
\sqrt{\frac2{\pi t}} +\alpha+\frac{\sqrt{t}}{\sqrt{2 \pi}}\alpha^2
\right)+\ss{\frac{1-\vv}{t}}\right].
\end{align*}
(iii) In the case $\alpha\le0$, we get from \eqref{E6.5.0} in a similar way:
\begin{align*}|\nabla \phi(x)| \le \e^{(\ll+K_V^+)t/2}\,\|\phi\|_{\infty}^\mstrut \,\left\{\vv\, 
\ff{\ss 2}{\ss{\pi t}}\,e^{-\f{\alpha^2t}{2}} 
+\ss{\frac{1-\vv}{t}}\right\}.
\end{align*}
This concludes the proof of Lemma \ref{L2.1a}.
\end{proof}

\begin{proposition}\label{L2.1b} We keep the assumptions of Lemma \ref{L2.1a}.\smallskip

\noindent{\rm(a)} If $\alpha\ge0$, then for any $(\phi,\ll)\in \Eig(L)$, 
\begin{align*}
\|\nabla\phi\|_\infty \le \sqrt{\e}\max_{\vv\in [0,1]}&\left\{\vv\, \left(
\alpha+\ff {\sqrt{2(\ll+K_V^+)}}{\ss{\pi}}\,\exp\left({-\frac{\alpha^2}{2(\ll+K_V^+)}}\right) 
+\min\bigg( |\a|,\, \ff{\sqrt{2}\alpha^2}{\ss{\pi(\ll+K_V^+)}}\bigg)
\right)\right.\\
&+\left.\ss{1-\vv}\,\sqrt{(\ll+K_V^+)}\right\}\|\phi\|_{\infty}^\mstrut,
\end{align*}
as well as
\begin{equation*}
\|\nabla\phi\|_\infty \le \sqrt{\e}\max_{\vv\in [0,1]}
\left\{\vv\, \left(
\alpha+\ff{\sqrt{2(\ll+K_V^+)}}{\ss{\pi}} +\ff{\alpha^2}{\ss{2\pi(\ll+K_V^+)}}
\right)+\ss{1-\vv}\,\sqrt{(\ll+K_V^+)}\right\}\|\phi\|_{\infty}^\mstrut
\end{equation*}
and
\begin{equation*}
\|\nabla\phi\|_\infty \le \sqrt{\e}\max_{\vv\in [0,1]}
\left\{\vv\, \left(
2\alpha+\ff{\sqrt{2(\ll+K_V^+)}}{\ss{\pi}}
\right)+\ss{1-\vv}\,\sqrt{(\ll+K_V^+)}\right\}\|\phi\|_{\infty}^\mstrut
\end{equation*}

\noindent{\rm(b)} If $\alpha\le0$, then \begin{equation*}
\|\nabla\phi\|_\infty \le \sqrt{\e}\max_{\vv\in [0,1]}
\left\{\vv\, 
\ff {\sqrt{2(\ll+K_V^+)}}{\ss{\pi}}\,\exp\left({-\frac{\alpha^2}{2(\ll+K_V^+)}}\right)
+\ss{1-\vv}\,\sqrt{(\ll+K_V^+)}\right\}\|\phi\|_{\infty}^\mstrut.
\end{equation*}
\end{proposition}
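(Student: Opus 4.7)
The plan is to derive Proposition \ref{L2.1b} directly from Lemma \ref{L2.1a} by choosing the specific value $t^\ast := 1/(\ll+K_V^+)$. Since each bound in Lemma \ref{L2.1a} is stated as $\inf_{t>0}\max_{\vv\in[0,1]}(\cdots)$, any fixed choice of $t$ yields a valid upper bound, and this particular choice is motivated by the fact that $\e^{(\ll+K_V^+)t/2}\cdot\sqrt{1/t}$ is minimized (up to a universal factor) near $t^\ast$, giving the clean prefactor $\sqrt{\e}\,\sqrt{\ll+K_V^+}$.

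With $t=t^\ast$ one has $\e^{(\ll+K_V^+)t/2}=\sqrt{\e}$ and the substitutions
\begin{equation*}
\sqrt{\frac{2}{\pi t^\ast}}=\sqrt{\frac{2(\ll+K_V^+)}{\pi}},\qquad \e^{-\alpha^2 t^\ast/2}=\exp\!\left(-\frac{\alpha^2}{2(\ll+K_V^+)}\right),\qquad \sqrt{\frac{1}{t^\ast}}=\sqrt{\ll+K_V^+},
\end{equation*}
together with
\begin{equation*}
\alpha^2\sqrt{\frac{2t^\ast}{\pi}}=\frac{\sqrt{2}\,\alpha^2}{\sqrt{\pi(\ll+K_V^+)}},\qquad \alpha^2\sqrt{\frac{t^\ast}{2\pi}}=\frac{\alpha^2}{\sqrt{2\pi(\ll+K_V^+)}}.
\end{equation*}
Plugging these identities into the three estimates of Lemma \ref{L2.1a}(a) (the case $\alpha\ge0$) produces, term by term, exactly the three inequalities of part (a) of the proposition, and plugging them into Lemma \ref{L2.1a}(b) (the case $\alpha\le0$) produces the inequality of part (b).

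There is no serious obstacle here: the argument is a single, well-chosen substitution followed by algebraic bookkeeping. The only point that requires a brief justification is that $t^\ast$ is genuinely admissible in Lemma \ref{L2.1a}, which is immediate since the lemma allows arbitrary $t>0$. If one wished to sharpen the constants further, one could instead optimize $t$ strictly (yielding a transcendental equation), but for our purposes the choice $t^\ast=1/(\ll+K_V^+)$ is exactly what feeds the subsequent optimization over $\vv$ in Theorem \ref{T2.0a}, where setting the derivative in $\vv$ to zero will produce the two-branch structure (the $\sqrt{\e(\ll+K_V)}$ branch and the $A+\tfrac{\ll+K_V}{4A}$ branch). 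Hence the proof amounts to recording the substitution and verifying that each resulting term matches the claimed expression.
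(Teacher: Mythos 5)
Your proof is correct and follows exactly the paper's own argument: the paper's proof of Proposition \ref{L2.1b} consists of the single instruction to take $t = 1/(\ll + K_V^+)$ in Lemma \ref{L2.1a}, and your substitutions accurately reproduce each of the four inequalities term by term.
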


\begin{proof} Take 
$t=1/{(\ll+K_V^+)}$ in Lemma \ref{L2.1a}.
\end{proof}

We are now ready to complete the proof of Theorem \ref{T2.0a}.

\begin{proof}[Proof of Theorem \rm\ref{T2.0a}] 
The claims of Theorem \rm\ref{T2.0a} (with the exception of estimate \eqref{T2.0.1_neg}) 
follow directly from the inequalities in Proposition \ref{L2.1b} together with the fact that for any $A,B\ge 0$,
\begin{equation}\label{Eq_AB}
\max_{\vv\in [0,1]}\left\{\vv A+ \ss{1-\vv}B\right\}=B\1_{\{B>2A\}} +\left( A+\ff{B^2}{4A}\right)\1_{\{B\le 2A\}}.
\end{equation}
Finally, to check \eqref{T2.0.1_neg} we may go back to \eqref{Eq:Est} from where we have
$$\|\nabla\phi\|_\infty\le \vv\e^{(\ll +K_V)^+ t/2} \,\|\nabla\phi\|_{\pp D,\infty}^\mstrut
+\ss{1-\vv}\,\e^{\ll t/2} \,\|\phi\|_\infty \,\bigg(\int_0^t |\dot h_s|^2 \e^{K_V s}
\,\d s\bigg)^{1/2}.$$
Taking
$$h_s= \ff{\e^{-K_Vt}-\e^{-K_Vs}}{\e^{-K_Vt}-1},\quad s\in [0,t],$$ we
obtain 
\begin{equation*}
\|\nabla\phi\|_\infty \le \inf_{t>0}\max_{\vv\in [0,1]}\left\{\vv \e^{(\ll+K_V)^+t/ 2}\, \|\nabla\phi\|_{\pp
  D,\infty}^\mstrut+ \|\phi\|_\infty \,\e^{\ll t/2}\ss{1-\vv}\,\left( \ff{K_V
}{1-\e^{-K_Vt}} \right)^{1/2}\right\}.\end{equation*}
Noting that
$$\ff{K_V}{1-\e^{-K_Vt}}\le \ff{K^+_V} {1-\e^{-K_V^+t}}\le t^{-1}\e^{K_V^+t},$$
and taking $t=(K_V^++\ll)^{-1} $
we obtain
\begin{equation*}
\|\nabla\phi\|_\infty \le \ss\e \max_{\vv\in [0,1]} \left\{
 \vv \|\nabla\phi\|_{\pp D,\infty}^\mstrut+  \ss{(1-\vv)(\ll+K^+_V)}
  \,\|\phi\|_\infty \right\}.\end{equation*}
 Applying Lemma \ref{L2.2} and Proposition \ref{P6.3} with $t= 1 /\ll$, we arrive at
\begin{align*} \|\nabla\phi\|_\infty 
&\le \|\phi\|_\infty\max_{\vv\in [0,1]}\left\{\e  \vv\left(\alpha+\ff{\ss{2\ll}}{\ss\pi}\e^{-\ff{\alpha^2 }{2\lambda}} 
+ |\alpha|\land \ff{\alpha^2\ss2}{\ss{\pi\ll}}\right) +\ss{\e(1-\vv)(\ll+K_V^+)} \right\}.\end{align*} 
The proof is then finished as above with observation \eqref{Eq_AB}.
\end{proof}

\section{Proof of Theorem \ref{T1.2}}

As in Section 2, we consider $L=\Delta+\nabla V$ and let $\Eig_N(L)$ be the set of the corresponding non-trivial eigenpairs for the Neumann problem of $L$. We also allow $\pp D=\varnothing$, then we consider the eigenproblem without boundary. We first consider the convex case, then extend to the general situation. In this section, $P_t$ denotes the (Neumann if $\pp D\ne\varnothing$) semigroup generated by $L/2$ on $D$. Let $X_t$ be the corresponding (reflecting) diffusion process which solves the SDE
\begin{equation}\label{SDE2} \d X_t= u_t\circ\d B_t +\frac12\nabla V(X_t)\,\d t +N(X_t)\,\d\ell_t,\end{equation}
where $B_t$ is a $d$-dimensional Euclidean Brownian motion, $u_t$ the horizontal lift of $X_t$ onto the orthonormal frame bundle, and $\ell_t$ the local time of $X_t$ on $\pp D$.

We will apply the following  Bismut type formula for the Neumann semigroup $P_t$, see \cite[Theorem 3.2.1]{W14}, where the multiplicative functional   process $Q_s$ was introduced in \cite{Hsu:2002}.

\begin{theorem}[\cite{W14}]\label{BSMT} Let  $\Ric_D^V\ge -K_V$ and $\I_{\pp D}\ge -\dd$ for some  $K_V\in C(\bar D)$ and $\dd\in C(\pp D)$. Then there exists a $\R^d\otimes\R^d$-valued adapted continuous process $Q_s$ with
\begin{equation}\label{QS} \|Q_t\|\le \exp\left(\frac12\int_0^t K_V(X_s)\d s +\int_0^t \dd(X_s)\d\ell_s\right),\quad s\ge 0,\end{equation}
such that for any $t>0$ and $h\in C^1([0,t])$ with $h(0)=0$, $h(t)=1$, there holds
\begin{equation}\label{BIS} \nabla P_t f = \E\bigg[f(X_t) \int_0^t h'(s) Q_s\d B_s\bigg],\quad f\in \B_b(D).\end{equation}
\end{theorem}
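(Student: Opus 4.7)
The plan is to derive \eqref{BIS} by combining (i) an explicit damped parallel-translation process $Q_s$ that absorbs both the Bakry--\'Emery Ricci of $L$ and the second fundamental form of $\pp D$, with (ii) Bismut's integration-by-parts for the Malliavin derivative of $P_tf$ along an $h$-modulated Cameron--Martin variation.

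First, I would define $Q_s$ as the $\R^d\otimes\R^d$-valued pathwise solution of
\begin{equation*}
  \d Q_s=-\ff12\,u_s^{-1}(\Ric_D^V)^\sharp(X_s)\,u_s\,Q_s\,\d s-u_s^{-1}\,\I_{\pp D}^\sharp(X_s)\,u_s\,Q_s\,\d\ell_s,\quad Q_0=\id_{\R^d},
\end{equation*}
where $\I_{\pp D}^\sharp$ is extended by $0$ in the inward normal direction. Since $\ell_s$ increases only when $X_s\in\pp D$, this makes pathwise sense; differentiating $|Q_sv|^2$, using $\Ric_D^V\ge-K_V$ on $D$ and $\I_{\pp D}\ge-\dd$ on $\pp D$, and applying Gronwall yields the operator-norm bound \eqref{QS}.

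Second, I would interpret $\tt J_sv:=u_s h(s)Q_sv$ as the variation of $X_s$ induced by the Cameron--Martin shift $B_r\mapsto B_r+\vv\int_0^r h'(u)Q_uv\,\d u$ of the driving Brownian motion. Because $h(0)=0$ the initial point is unperturbed, and because $h(t)=1$ the derivative of the flow at time $t$ is $u_tQ_tv$. Applying Girsanov and differentiating at $\vv=0$ would then give
\begin{equation*}
\E\bigl[\langle\nn f(X_t),u_tQ_tv\rangle\bigr]=\E\left[f(X_t)\int_0^t\langle h'(s)Q_sv,\d B_s\rangle\right].
\end{equation*}
The remaining task is to identify the left-hand side with $\langle\nn P_tf(x),v\rangle$. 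This I would do by showing that $M_s:=\langle u_s^{-1}(\nn P_{t-s}f)(X_s), Q_sv\rangle$ is a local martingale: an It\^o computation using the Weitzenb\"ock identity \eqref{Eq:Comm} kills the $\d s$-term, while the Neumann-induced boundary identity $\nn_N\nn P_{t-s}f=\I_{\pp D}^\sharp\nn P_{t-s}f$ on $\pp D$ (a direct consequence of $NP_{t-s}f|_{\pp D}=0$ and the definition of $\I_{\pp D}$) cancels the $\d\ell_s$-term, precisely because of how $\I_{\pp D}^\sharp$ enters the ODE for $Q_s$.

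The main obstacle is the rigorous treatment of the local-time term: the map $x\mapsto X_t^x$ is not classically differentiable across $\pp D$, so the ``derivative flow'' used in the Girsanov step has to be constructed through approximation. A standard route is to replace reflection by a steep smooth penalty potential $V_n\uparrow\infty$ outside $D$, prove \eqref{BIS} in the boundary-free case via the classical Elworthy--Li argument where everything is smooth, and pass to the limit using the convergence of the penalised local times to $\ell$ and of $Q_s^{(n)}$ to $Q_s$ in $L^2$. Alternatively, one follows Hsu's pathwise construction \cite{Hsu:2002} of damped parallel translation for reflecting Brownian motion. Either way, the $\I_{\pp D}^\sharp$-correction in $Q_s$ is exactly what compensates the boundary jump of the naive derivative flow and produces the clean formula \eqref{BIS}.
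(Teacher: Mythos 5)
The paper offers no proof of this statement: it is quoted verbatim from \cite[Theorem~3.2.1]{W14}, with the multiplicative functional $Q_s$ attributed to \cite{Hsu:2002}. So the only comparison available is against those references, not against the present paper.

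Your overall blueprint---a damped parallel translation $Q_s$ absorbing both $\Ric_D^V$ (via $\d s$) and the second fundamental form (via $\d\ell_s$), a Gronwall bound giving \eqref{QS}, a Cameron--Martin/Girsanov variation giving an integration-by-parts identity, a martingale argument identifying $\E\langle\nabla f(X_t),u_tQ_tv\rangle$ with $\langle\nabla P_tf(x),v\rangle$, and a penalisation scheme or Hsu's pathwise construction to make the local-time manipulations rigorous---is indeed the right flavour of argument, and it is essentially the route in \cite{Hsu:2002} and \cite[Section~3.2]{W14}.

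There is, however, a genuine gap in the cancellation you assert for the martingale step. Write $g:=P_{t-s}f$. Differentiating $Ng|_{\pp D}=0$ in a direction $Y$ tangent to $\pp D$ gives $\Hess_g(N,Y)=\I_{\pp D}(Y,\nabla g)$, which is the identity you use. But this derivation requires $Y\in T\pp D$; for $Y=N$ it says nothing, and $\Hess_g(N,N)$ is \emph{not} controlled by the second fundamental form and does not vanish in general. Now the $\d\ell_s$-contribution to $\d M_s$ is
\begin{equation*}
\Big(\Hess_{g}\big(N,u_sQ_sv\big)-\I_{\pp D}\big(\nabla g,(u_sQ_sv)^{T}\big)\Big)\,\d\ell_s ,
\end{equation*}
and decomposing $u_sQ_sv=Y+aN$ with $Y\in T\pp D$ leaves a residual $a\,\Hess_{g}(N,N)\,\d\ell_s$. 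Your SDE for $Q_s$, with $\I_{\pp D}^\sharp$ extended by $0$ on the normal direction, evolves the normal component of $u_sQ_sv$ only by the Ricci term, so there is no mechanism forcing $a=0$ on $\{\d\ell_s>0\}$, and the martingale claim breaks. The same issue infects the Girsanov step, since the tangent process of a normally reflecting diffusion must itself be handled at the boundary (the normal component of the derivative flow is reflected/projected), and your $Q_s$ does not encode that. This is precisely the nontrivial content of Hsu's construction: the correct multiplicative functional either incorporates a projection onto $T\pp D$ at boundary times, or is obtained as a limit of penalised (boundary-free) systems together with an identification of the limit. You gesture at both routes in your last paragraph, but they are the substance of the theorem rather than a technical afterthought, and as stated the direct cancellation argument does not close.
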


\subsection{The case with convex or empty boundary}
In this part we assume that $\pp D$ is either convex or empty. When $\pp D$ is empty, $D$ is a Riemannian manifold without boundary and $\Eig_N(L)$ denotes the set of eigenpairs for the  eigenproblem without boundary.
In this case, if $\Ric^V\ge K_V$ for some constant $K_V\in\R$, then  $\ll+K_V\ge 0$ for   $(\phi,\ll)\in \Eig_N(L)$,   see for instance \cite{W94}.

\begin{theorem}\label{T3.1} Assume that $\pp D$ is either convex or empty.
\begin{enumerate} \item[$(1)$] If the curvature-dimension condition $\eqref{CD}$ holds, then for any $(\phi,\ll)\in \Eig_N(L)$,
$$  \|\nabla\phi\|_\infty^2 \ge \ff{\ll^2\|\phi\|_\infty^2}{n(\ll+K)}\Big(\ff\ll{\ll+K}\Big)^{\ll/ K} \ge \ff{\ll^2\|\phi\|_\infty^2}{n\e(\ll+K^+)}.$$
\item[$(2)$] If $\Ric^V_D\ge -K_V$ for some constant $K_V\in \R$, then for any $(\phi,\ll)\in \Eig_N(L)$,
$$\ff{\|\nabla \phi\|_\infty ^2}{\|\phi\|_\infty^2} \le \ff{2(\ll+K_V)}\pi \Big(1+\ff{K_V}\ll\Big)^{\ll/{K_V}} \le \ff{2\e(\ll+K_V^+)}\pi.$$ \end{enumerate}  
\end{theorem}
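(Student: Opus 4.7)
My plan is to establish the lower and upper bounds separately, in close parallel with Section~2, but using the reflecting $L/2$-diffusion $X_t$ from \eqref{SDE2}; when $\pp D=\varnothing$ we have $\ell_t\equiv 0$ and both arguments simplify automatically.

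\textbf{Lower bound (part 1).} I would mirror Theorem~\ref{T2.1}. It\^o's formula for $|\nabla\phi|^2(X_t)$ now carries an extra local-time term
\[
d|\nabla\phi|^2(X_t) = \tfrac12 L|\nabla\phi|^2(X_t)\,dt + dM_t + N|\nabla\phi|^2(X_t)\,d\ell_t.
\]
Since $N\phi|_{\pp D}=0$ forces $\nabla\phi$ to be tangent to $\pp D$, differentiating $\langle N,\nabla\phi\rangle=0$ in a tangential direction and using Hessian symmetry yields $N|\nabla\phi|^2=2\,\I_{\pp D}(\nabla\phi,\nabla\phi)\ge 0$ under convexity; hence the boundary term may be dropped. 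Combining the CD condition \eqref{CD} with $L\phi=-\ll\phi$ gives $\tfrac12 L|\nabla\phi|^2\ge -(K+\ll)|\nabla\phi|^2+\ll^2\phi^2/n$; multiplying by $\e^{(K+\ll)s}$, taking expectation, and applying Jensen together with $\E\phi(X_s)=\e^{-\ll s/2}\phi(x)$ (which holds because the Neumann boundary condition makes $\e^{\ll s/2}\phi(X_s)$ a martingale) produces
\[
\e^{(K+\ll)^+ t}\|\nabla\phi\|_\infty^2 \ge \frac{\ll^2\|\phi\|_\infty^2}{n}\cdot\frac{\e^{Kt}-1}{K}.
\]
Evaluating at a point $x$ maximizing $|\phi|$ and optimizing at $t^*=K^{-1}\log(1+K/\ll)$ yields the first claimed inequality; the weaker form $\ge \ll^2/(n\e(\ll+K^+))$ then follows from $(\ll/(\ll+K))^{\ll/K}\ge 1/\e$ for $K\ge 0$ together with the elementary check $u\log u\ge u-1$ on $(0,1)$ in the case $K<0$.

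\textbf{Upper bound (part 2).} I would use the Bismut formula of Theorem~\ref{BSMT}. Since $\pp D$ is convex or empty, we take $\dd=0$ in \eqref{QS}, so $\|Q_s\|\le \e^{K_V s/2}$. Because $L\phi=-\ll\phi$ implies $P_t\phi=\e^{-\ll t/2}\phi$, formula \eqref{BIS} applied to $f=\phi$ gives
\[
\nabla\phi(x) = \e^{\ll t/2}\,\E\Big[\phi(X_t)\int_0^t h'(s)Q_s\,dB_s\Big].
\]
The strategy is to derive, from the Bismut formula, the Wang-type Neumann gradient estimate
\[
\|\nabla P_t f\|_\infty \le \sqrt{\tfrac{2}{\pi}}\,\sqrt{\tfrac{K_V}{1-\e^{-K_V t}}}\,\|f\|_\infty,
\]
by analyzing the scalar martingale $M^v_t=\int_0^t h'(s)\langle Q_s^*v,dB_s\rangle$ for $|v|=1$: the optimal choice $h'(s)\propto\e^{-K_V s}$ (Cauchy--Schwarz on the constraint $\int h'\,ds=1$) gives $\E[M^v]_t\le K_V/(1-\e^{-K_Vt})$, and combining with the Gaussian moment bound $\E|M^v_t|\le \sqrt{2/\pi}\,\sqrt{\E[M^v]_t}$ via the Dambis--Dubins--Schwarz time change $M^v_t=\beta_{[M^v]_t}$ and $\E|\beta_s|=\sqrt{2s/\pi}$ yields the displayed inequality. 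Applied to $f=\phi$, this produces $|\nabla\phi|^2(x)\le (2/\pi)\,\e^{\ll t}\|\phi\|_\infty^2\,K_V/(1-\e^{-K_V t})$, and optimizing at $t^*=K_V^{-1}\log(1+K_V/\ll)$ gives the first stated inequality $(2/\pi)(\ll+K_V)(1+K_V/\ll)^{\ll/K_V}\|\phi\|_\infty^2$. The final form $\le 2\e(\ll+K_V^+)/\pi$ follows from $(1+y)^{1/y}\le\e$ for $y>0$ together with the same $u\log u\ge u-1$ check when $K_V<0$.

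\textbf{Expected main obstacle.} The delicate point is recovering the sharp factor $\sqrt{2/\pi}$ in the semigroup gradient estimate. A naive Cauchy--Schwarz bound on the Bismut martingale only gives $\sqrt{\E[M^v]_t}$ and would weaken the overall constant from $2/\pi$ to $1$; the improvement must exploit the Gaussian structure of the directional projection $M^v_t$ (not a universal consequence of BDG, since the DDS time change is not in general independent of the driving Brownian motion). The remaining ingredients — It\^o's formula for the reflecting diffusion, the boundary identity $N|\nabla\phi|^2=2\,\I_{\pp D}(\nabla\phi,\nabla\phi)$, the CD/Jensen step, and the one-variable optimization at $t^*$ — are all routine once this sharp moment bound is in hand.
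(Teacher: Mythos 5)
Your plan follows the paper's own route for both halves: part~(1) by It\^o's formula for $|\nabla\phi|^2(X_t)$ along the reflecting diffusion together with the boundary identity $N|\nabla\phi|^2=2\,\I_{\pp D}(\nabla\phi,\nabla\phi)\ge 0$, and part~(2) by the Bismut formula with the exponential weight $h'(s)\propto\e^{-K_V s}$ and the subsequent optimisation at $t=K_V^{-1}\log(1+K_V/\ll)$. The lower-bound argument and both one-variable optimisations are correct as stated.

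The one genuine gap is the step you yourself flag as the ``main obstacle'', namely the extraction of the factor $\sqrt{2/\pi}$. Your proposed intermediate inequality $\E|M^v_t|\le\sqrt{2/\pi}\,\sqrt{\E[M^v]_t}$ is false for a general continuous local martingale: take $M_t=\beta_{t\wedge T}$ with $T=\inf\{s:|\beta_s|=1\}$, so that $\E|M_t|\to 1$ and $\sqrt{\E[M]_t}\to\sqrt{\E T}=1$, and the asserted bound would give $1\le\sqrt{2/\pi}\approx 0.80$, a contradiction. Your suggested fix of ``exploiting the Gaussian structure of $M^v_t$'' does not help either, because $Q_s$ is adapted and the stochastic integral is not Gaussian in general. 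The correct resolution is more elementary and you already have the ingredient: since $\pp D$ is convex one takes $\dd=0$ in \eqref{QS}, which gives the \emph{pointwise almost sure} bound $\|Q_s\|\le\e^{K_V s/2}$, hence the quadratic variation satisfies
$$[M^v]_t\ \le\ \bar\sigma_t^2:=\int_0^t|h'(s)|^2\,\e^{K_V s}\,\d s\qquad\text{a.s.,}$$
a deterministic bound rather than merely an $L^1$ one. Under the Dambis--Dubins--Schwarz time change $M^v_t=\beta_{[M^v]_t}$, the random time $[M^v]_t$ is then a stopping time for $\beta$ bounded by $\bar\sigma_t^2$; since $|\beta|$ is a nonnegative submartingale, optional stopping yields $\E|M^v_t|=\E\big|\beta_{[M^v]_t}\big|\le\E\big|\beta_{\bar\sigma_t^2}\big|=\sqrt{2\bar\sigma_t^2/\pi}$. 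This is the content behind the Gaussian integral appearing in the paper's display \eqref{GRS}, and once you insert it your derivation of part~(2) goes through unchanged.
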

\begin{proof} (a) We start by establishing the lower bound estimate.
By It\^o's formula, for any $(\phi,\ll)\in \Eig_N(L)$ we have
\begin{equation}\label{ITC} \d |\nabla \phi|^2(X_t)= \ff 1 2 L|\nabla \phi|^2(X_t) \,\d t + 2\I_{\pp D}(\nabla \phi,\nabla \phi)(X_t)\,\d\ell_t +\d M_t,\quad t\ge 0,
\end{equation}
where $\ell_t$ is the local time of $X_t$ at $\pp D$, which is an increasing process. Since $\I_{\pp D}\ge 0$, and since \eqref{CD} and $L\phi=-\ll\phi$ imply
$$ \ff 1 2 L|\nabla \phi|^2\ge -(K+\ll)|\nabla \phi|^2+\ff{\ll^2}n \phi^2,$$
 we obtain
 $$\d |\nabla \phi|^2(X_t)\ge \Big(\ff{\ll^2}n \phi^2-(\ll+K)|\nabla \phi|^2\Big)(X_t) \,\d t   +\d M_t,\quad t\ge 0.$$
Noting that for $X_0=x\in D$ we have $$\E[\phi(X_s)^2] \ge (\E[\phi(X_s)])^2= \e^{-\ll s}\phi(x)^2,$$ we arrive at
\begin{align*}\e^{(\ll+K)t} \,\|\nabla\phi\|_\infty^2
&\ge \e^{(\ll+K)t}\,\E[|\nabla\phi|^2(X_t)]\ge \ff{\ll^2}n \int_0^t \e^{(\ll+K)s}\,\E [\phi^2(X_s)]\,\d s \\
&\ge \ff{\ll^2} n \int_0^t \e^{Ks} \phi(x)^2\,\d s = \ff{\ll^2(\e^{Kt}-1)}{nK} \phi(x)^2.\end{align*} 
Multiplying by  $\e^{-(\ll+K)t}$, choosing $t=\ff 1 K\log(1+\ff K \ll)$ (noting that $\ll+K\ge 0$, in case $\ll+K=0$ taking $t\to\infty$),  and  taking  the supremum over  $x\in D$, we finish the proof of (1).\smallskip

(b) Let $\pp D$ be convex and $\Ric_D^V\ge -K_V$ for some constant $K_V$. Then Theorem \ref{BSMT} holds for $\dd=0$, so that
  $$\si_t:=\bigg(\E\int_0^t|h'(s)|^2 \|Q_s\|^2\,\d s\bigg)^{1/2}\le \bigg(\int_0^t |h'(s)|^2\,
\e^{K_V s} \,\d s\bigg)^{1/2}.$$
  Taking $$h(s)=\ff{\int_0^s \e^{-K_V r}\,\d r}{\int_0^t \e^{-K_V r}\,\d r}$$ we obtain
 $$\si_t\le \Big(\ff{K_V}{1-\e^{-K_Vt}}\Big)^{1/2}.$$
 Therefore,
\begin{equation}\label{GRS}\begin{split}  \|\nabla P_tf\|_\infty
&\le \|f\|_\infty \,\E\left|\int_0^th'(s) Q_s\d B_s \right|\\
 &\le \|f\|_\infty \ff{2}{\ss{2\pi}\,\si_t} \int_0^\infty s \exp\left({-\ff{s^2}{2\si_t^2}}\right)\d s\\
& =\|f\|_\infty\ff{\si_t\ss 2}{\ss\pi},\quad t>0,\ f\in \B_b(D).\end{split}\end{equation}
Applying this to $(\phi,\ll)\in \Eig_N(L)$, we obtain
$$\e^{-\ll t/2} |\nabla \phi|\le \|\phi\|_\infty \ff{\si_t\ss 2}{\ss\pi}\le \|\phi\|_\infty \left(\ff{2K_V}{\pi(1-\e^{-2K_Vt})}\right)^{1/2},\quad t>0.$$
Consequently, $\ll+K_V\ge 0$.
Taking $t= \ff 1 {K_V}\log(1+\ff{K_V}\ll)$ as above, we arrive at
\begin{equation*}\ff{\|\nabla \phi\|_\infty^2}{\|\phi\|_\infty^2} \le  \ff{2(\ll+K_V)}\pi \Big(1+ \ff{K_V}\ll\Big)^{\ll/ {K_V}}.
\end{equation*}
\end{proof}

\subsection{The non-convex case}

 When $\pp D$ is non-convex, a conformal change of metric may be performed to make $\pp M$ convex under the new metric; this strategy has been used in \cite{CTh, W07,W10a, W10b} for the study of functional inequalities on non-convex manifolds. According to \cite[Theorem 1.2.5]{W14}, for a strictly positive
 function $f\in C^\infty(\bar D)$ with  
$\I_{\pp D}+N\log f|_{\pp D} \ge 0$,
the boundary $\pp D$ is convex under the metric $f^{-2} \<\cdot,\cdot\>$. For simplicity, we will assume that $f\ge 1$. Hence, we take as class of reference functions
 $$\sD:=\big\{f\in C^2(\bar D)\colon \inf f= 1,
\ \I_{\pp D}+N\log f \ge 0\big\}.$$
 Assume  $\eqref{CD}$ and $\Ric_D^V\ge -K_V$  for some constants $n\ge d$ and $K, K_V\in\R$. For any $f\in \mathscr{D}$ and $\vv\in (0,1)$, define
 \begin{align*} c_\vv(f):= \sup_D\left\{\ff{4\vv|\nabla\log f|^2}{1-\vv} +\vv K+(1-\vv)K_V   -2 L \log f\right\}.\end{align*}
We let $\ll_1^N$ be the smallest non-trivial Neumann eigenvalue of $-L$. The following result implies $\ll_1\ge -c_\vv(f)$.
\begin{theorem}\label{T3.2} Let $f\in \sD$.
\begin{enumerate}[\rm(1)]
\item If   $\eqref{CD}$ and $\Ric_D^V\ge -K_V$  hold for some constants $n\ge d$ and $K, K_V\in\R$. Then for any non-trivial $(\phi,\ll)\in \Eig_N(L)$, we have $\ll+c_\vv(f)\ge 0$ and
$$ \ff{\|f\|_\infty^2\|\nabla\phi\|_\infty^2 }{\|\phi\|_\infty^2}\ge   \sup_{\vv\in (0,1)} \ff{\vv\ll^2}{n(\ll+c_\vv(f))} \Big(\ff\ll{\ll+c_\vv(f)}\Big)^{\ll/ {c_\vv(f)}} \ge  \sup_{\vv\in (0,1)} \ff{\vv\ll^2}{n\e(\ll+c_\vv(f)^+)}.$$
\item Let $\Ric_D^V\ge -K_V$ for some $K_V\in C(\bar D)$, and
$$K(f)=\sup\limits_D\left\{  2 |\nabla\log f|^2+K_V-L\log f\right\}.$$  Then for any non-trivial $(\phi,\ll)\in \Eig_N(L)$, we have $\ll+K(f)\ge 0$ and
$$\ff{\|\nabla \phi\|_\infty^2 }{\|\phi\|_\infty^2\|f\|_\infty^2}    \le  \ff{2(\ll+K(f))} {\pi} \Big(1+\ff{K(f)}\ll\Big)^{\ll/{K(f)}} \le \ff{2\e (\ll+K(f)^+)} {\pi}.$$ \end{enumerate}
\end{theorem}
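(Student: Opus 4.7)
The plan is to extend Theorem~\ref{T3.1} to non-convex $\pp D$ by exploiting the reference function $f\in\sD$, which renders $\pp D$ convex in the conformal metric $f^{-2}\la\cdot,\cdot\ra$. Accordingly the natural quantities to propagate along the reflecting $L/2$-diffusion $X_t$ are the weighted squared gradient $f^2|\nabla\phi|^2$ (for the lower bound) and an $f$-corrected Bismut multiplicative functional (for the upper bound). The defining property $\I_{\pp D}+N\log f\ge 0$ of $\sD$ is precisely what is needed to make the resulting boundary local-time contributions non-negative.

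\textbf{Part (1): lower bound.} I apply It\^o's formula to $F_t:=f^2(X_t)|\nabla\phi|^2(X_t)$. Since $\nabla\phi$ is tangent on $\pp D$ (Neumann condition), $N|\nabla\phi|^2=2\I_{\pp D}(\nabla\phi,\nabla\phi)$, and the local-time contribution equals $2f^2\{\I_{\pp D}+N\log f\}(\nabla\phi,\nabla\phi)\d\ell_t\ge 0$ by the definition of $\sD$. For the drift I would expand, via the product rule and the Bochner identity $\ff12 Lh=|\Hess\phi|^2+\Ric^V(\nabla\phi,\nabla\phi)-\ll h$ (with $h:=|\nabla\phi|^2$):
\[
\ff12 L(f^2h)=f^2\bigl\{|\Hess\phi|^2+\Ric^V-\ll h+hL\log f+2h|\nabla\log f|^2+4\Hess\phi(\nabla\phi,\nabla\log f)\bigr\},
\]
using the identity $Lf/f=L\log f+|\nabla\log f|^2$. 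I then distribute a weight $\vv\in(0,1)$ between the (CD)~bound $|\Hess\phi|^2+\Ric^V\ge -Kh+\ff{\ll^2}n\phi^2$ and the trivial bound $\Ric^V\ge -K_V h$, and absorb the cross-term $4\Hess\phi(\nabla\phi,\nabla\log f)$ into the remaining $(1-\vv)|\Hess\phi|^2$ by the sharp completion-of-squares inequality
$(1-\vv)|A|^2+4\la Av,w\ra\ge -\ff{4}{1-\vv}|v|^2|w|^2$ on symmetric tensors $A$. The outcome is a pointwise estimate of the form
\[
\ff12L(f^2h)\ge \vv\ff{\ll^2}{n}f^2\phi^2-\bigl(\ll+c_\vv(f)\bigr)f^2h.
\]
Taking expectation of the It\^o inequality, using Jensen's inequality $\E\phi(X_s)^2\ge(P_s\phi(x))^2=\e^{-\ll s}\phi(x)^2$, multiplying by $\e^{-(\ll+c_\vv(f))t}$, integrating and optimising at $t=\ff1{c_\vv(f)}\log(1+\ff{c_\vv(f)}\ll)$ (exactly the choice used in Theorem~\ref{T3.1}) gives the double inequality; the final upper bound by $\vv\ll^2/(d\e(\ll+c_\vv(f)^+))$ comes from $(1+x)^{1/x}\le\e$, and positivity of $\ll+c_\vv(f)$ falls out of the argument.

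\textbf{Part (2): upper bound.} I start from the Bismut formula of Theorem~\ref{BSMT}: $\nabla P_t\phi=\E[\phi(X_t)\int_0^th'(s)Q_s\d B_s]$ with $\|Q_t\|^2\le\exp(\int_0^tK_V(X_s)\d s+2\int_0^t\dd(X_s)\d\ell_s)$. The non-convex local-time contribution in the exponent is killed by It\^o's formula applied to $\log f(X_t)$:
\[
\int_0^tN\log f(X_s)\d\ell_s=\log f(X_t)-\log f(x)-\ff12\int_0^tL\log f(X_s)\d s-\int_0^t\la\nabla\log f(X_s),u_s\d B_s\ra.
\]
Using $N\log f\ge\dd$, exponentiating, and writing the stochastic-integral factor as an exponential supermartingale times $\exp(2\int_0^t|\nabla\log f|^2\d s)$, this yields $\E\|Q_s\|^2\le \|f\|_\infty^2 f^{-2}(x)\,\e^{K(f)s}$ with $K(f)=\sup_D(K_V-L\log f+2|\nabla\log f|^2)$. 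Then I proceed exactly as in Theorem~\ref{T3.1}(2): choose $h(s)=\int_0^s\e^{-K(f)r}\d r/\int_0^t\e^{-K(f)r}\d r$, invoke the one-dimensional Gaussian bound $\E|\int_0^th'(s)Q_sv\cdot\d B_s|\le\sqrt{2/\pi}(\int_0^t|h'(s)|^2\E\|Q_s\|^2\d s)^{1/2}$ together with $f\ge 1$, and optimise at $t=\ff1{K(f)}\log(1+\ff{K(f)}\ll)$ to recover the stated upper bound.

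\textbf{Main obstacle.} The delicate step is the sharpness of the pointwise lower bound in part~(1): one must balance the weight $\vv$ between the (CD)-controlled dimensional term $\ff{\ll^2\phi^2}{n}$ and the Young-type absorption of the cross-term $\Hess\phi(\nabla\phi,\nabla\log f)$, while carrying the rewriting $Lf/f=L\log f+|\nabla\log f|^2$ consistently, so as to reproduce the functional $c_\vv(f)$ stated in the theorem; the symmetry of $\Hess\phi$ must be respected in the completion-of-squares step. Part~(2) is structurally parallel to the convex case of Theorem~\ref{T3.1}(2); its only genuinely new input is the It\^o-on-$\log f$ trick that converts the non-convex boundary local time into an exponential-supermartingale correction involving $L\log f$ and $|\nabla\log f|^2$, which is exactly the mechanism producing $K(f)$.
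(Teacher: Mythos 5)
Your strategy reproduces the paper's proof almost line for line. For Part~(1) you propagate $f^2|\nabla\phi|^2$ along the reflecting diffusion, kill the local time by the defining property of $\sD$, expand $\tfrac12 L(f^2|\nabla\phi|^2)$ via Bochner, and split the weight $\vv$ between the (CD) bound and a Young-type absorption of the cross term $\Hess_\phi(\nabla\phi,\nabla\log f)$ into $(1-\vv)|\Hess_\phi|^2$ --- exactly the paper's computation. For Part~(2) the paper introduces the local martingale $M_t=f^{-p}(X_t)\exp\bigl(-\tfrac12\int_0^t f^pLf^{-p}\,\d s+p\int_0^t N\log f\,\d\ell_s\bigr)$ with $p=2$, while you apply It\^o directly to $\log f(X_t)$ to rewrite $\int_0^t N\log f\,\d\ell_s$ and split off a Girsanov exponential; these are literally the same object (substituting your identity into $\log M_t$ shows $M_t=f^{-p}(x)$ times the exponential martingale you write down), so the two formulations are equivalent and produce the same $K(f)$.

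The one genuine weakness is that in Part~(1) you assert, but do not carry out, the verification that the pointwise estimate yields exactly the stated $c_\vv(f)$. If one actually executes the expansion you propose --- $\tfrac12 L(f^2h)=f^2\bigl\{|\Hess_\phi|^2+\Ric^V-\ll h+hL\log f+2h|\nabla\log f|^2+4\Hess_\phi(\nabla\phi,\nabla\log f)\bigr\}$, Young on the cross term, (CD) with weight $\vv$ --- the coefficient of $f^2h$ that comes out is
\[
\ll+\vv K+(1-\vv)K_V+\ff{4|\nabla\log f|^2}{1-\vv}-L\log f-2|\nabla\log f|^2,
\]
which differs from $\ll+c_\vv(f)$ by the extra term $2|\nabla\log f|^2+L\log f=\tfrac12 f^{-2}Lf^2$, since the generator $\tfrac12 L$ contributes $\tfrac12 hLf^2$ rather than $hLf^2$. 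The same factor-of-two slip seems to be present in the paper's displayed It\^o computation (where $-f^{-2}Lf^2$ appears instead of $-\tfrac12 f^{-2}Lf^2$), so your outline lands where the paper lands; but since you flag this precisely as the ``main obstacle,'' it is the step you most needed to actually check rather than assume. Everything else, including Jensen's inequality $\E[\phi(X_s)^2]\ge\e^{-\ll s}\phi(x)^2$, the choice $t=\tfrac1{c_\vv}\log(1+c_\vv/\ll)$, and the Gaussian moment bound $\E|\int_0^t h'(s)Q_s\,\d B_s|\le\sqrt{2/\pi}\,\si_t$, matches the paper.
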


\begin{proof} Let $f\in \D$ and $(\phi,\ll)\in \Eig_N(L)$.\smallskip

(1) On $\pp D$ we have
\begin{align} N(f^2 |\nabla \phi|^2) &= (N f^2)|\nabla \phi|^2 + f^2N|\nabla \phi|^2\notag\\
& =f^2\big((N\log f^2) |\nabla \phi|^2 + 2\I_{\pp D}(\nabla \phi,\nabla\phi)\big)\notag\\
& =2f^2\big((N\log f) |\nabla \phi|^2 + \I_{\pp D}(\nabla \phi,\nabla\phi)\big)\ge 0.
\label{BN}\end{align}
Next, by the Bochner-Weitzenb\"ock formula,
using that $\Ric_D^V\ge -K_V$ and $L\phi=-\ll\phi$,
we observe
\begin{align*}\ff  1 2 L |\nabla \phi|^2 &=\ff 1 2 L |\nabla\phi|^2 -\<\nabla L\phi,\nabla\phi\>
-\ll |\nabla\phi|^2\\
&\ge \|\Hess_\phi\|_{\HS}^2 -(K_V+\ll)|\nabla\phi|^2.
\end{align*}
Combining this with \eqref{LMN}, for any $\vv\in (0,1)$,  we obtain
\begin{align*} \ff{f^2} 2 &L |\nabla \phi|^2 +\<\nabla f^2, \nabla|\nabla \phi|^2\> \\
&\ge -f^2 (\vv K +(1-\vv) K_V +\ll) |\nabla\phi|^2 +\ff{\vv \ll^2}n f^2 \phi^2\\
&\quad +(1-\vv) f^2\|\Hess_\phi\|_{\HS}^2 - 2 \|\Hess_\phi\|_{\HS}^\mstrut\times|\nabla f^2|\times |\nabla \phi|\\
&\ge -\left\{\ff{|\nabla\log f^2|^2}{1-\vv} +\vv K+ (1-\vv) K_V +\ll \right\} f^2|\nabla\phi|^2 + \ff{\vv \ll^2}n f^2\phi^2.\end{align*}
Combining this with \eqref{BN} and applying It\^o's formula, we obtain
\begin{align*} \d &(f^2|\nabla\phi|^2)(X_t) \mequal \frac12L(f^2|\nabla\phi|^2)(X_t)\,\d t+ N(f^2|\nabla \phi|^2)(X_t)\,\d\ell_t \\
&\ge -\frac12\Big(f^2L|\nabla\phi|^2 +2\<\nabla f^2, \nabla|\nabla \phi|^2\> +|\nabla\phi|^2 Lf^2\Big)(X_t)\,\d t \\
&\ge \left\{ \ff{\vv \ll^2}{n}f^2 \phi^2 -\left(\ff{|\nabla\log f^2|^2}{1-\vv} +\vv K+(1-\vv)K_V +\ll-f^{-2}Lf^2\right)f^2|\nabla\phi|^2\right\}(X_t)\,\d t\\
&\ge \left(\ff{\vv \ll^2} n \phi^2 - \big(\ll+c_\vv(f)\big)f^2|\nabla \phi|^2\right)(X_t)\,\d t.\end{align*}
Hence, for $X_0=x\in D$,
\begin{align*}  \|f\|_\infty^2 \,\|\nabla \phi\|_\infty^2\,\e^{(\ll+ c_\vv(f)) t}
&\ge \E\left[\e^{c_\vv(f)t}(f^2|\nabla\phi|^2)(X_t)\right]\\
&\ge \ff{\vv \ll^2}n \int_0^t \e^{(\ll+c_\vv(f))s}\, \E[\phi(X_s)^2]\,\d s\\
&\ge \ff{\vv \ll^2}n \int_0^t \e^{c_\vv(f) s}  \phi(x)^2\,\d s\\
&= \ff{\vv\ll^2(\e^{c_\vv(f)t}-1)}{nc_\vv(f)} \,\phi(x)^2.\end{align*}
This implies $\ll+c_\vv(f)\ge 0$ and
\begin{align*} &\ff{\|f\|_\infty^2\|\nabla\phi\|_\infty^2}{ \|\phi\|_\infty^2} \ge \sup_{t>0} \ff{\vv\ll^2\left(\e^{-\ll t}-\e^{-(\ll+c_\vv(f))t}\right)}{nc_\vv(f)}\\
&=
\ff{\vv\ll^2}{n(\ll+c_\vv(f))} \Big(\ff \ll {\ll +c_\vv(f)}\Big)^{\ll/{c_\vv(f)}} \ge \ff{\vv\ll^2}{n\e(\ll+c_\vv(f)^+)}.\end{align*}

(2) The claim could be derived from \cite[inequality (2.12)]{CTh}.
For the sake of completeness we include a sketch of the proof.
For any $p>1$, let
$$K_p(f) =  \sup_D\left\{K_V + p|\nabla\log f|^2-L\log f\right\}.$$
Note that $p|\nabla\log f|^2-L\log f=p^{-1}f^pLf^{-p}$.
Since $f\in \sD$ implies $\I_{\pp D}\geq-N\log f$, we have
\begin{align*}\|Q_t\|^2&\leq
\exp\left(\int_0^t K_V(X_s)\,\d s + 2\int_0^t N\log f (X_s)\,\d\ell_s\right)\\
&\leq\exp\big(K_p(f)t\big)\exp\left(-\frac1{p}\int_0^t(f^pLf^{-p})(X_s)\,\d s
+2\int_0^t N\log f (X_s)\,\d\ell_s\right).
\end{align*}
As
\begin{align*}
\d f^{-p}(X_t)&\mequal\frac12Lf^{-p}(X_t)\,\d t+Nf^{-p}(X_t)\,\d\ell_t\\
&=-f^{-p}(X_t)\left(-\frac12f^{p}Lf^{-p}(X_t)\,\d t+pN\log f (X_t)\,\d\ell_t\right),
\end{align*}
we obtain that
$$M_t:=f^{-p}(X_t)\exp\left(-\frac12\int_0^tf^{p}(X_s)Lf^{-p}(X_s)\,\d s
+p\int_0^tN\log f (X_s)\,\d\ell_s \right)$$
is a (local) martingale. Proceeding as in the proof of \cite[Corollary 3.2.8]{W14}
or \cite[Theorem~2.4]{CTh}, we get
\begin{align*}
\|f\|^{-p}_\infty&\,\E\left[\exp\left(-\frac12\int_0^tf^{p}(X_s)Lf^{-p}(X_s)\,\d s+p\int_0^tN\log f (X_s)\,\d\ell_s \right)\right]\\
&\leq
\E\left[f^{-p}(X_t)\exp\left(-\frac12\int_0^tf^{p}(X_s)Lf^{-p}(X_s)\,\d s+p\int_0^tN\log f (X_s)\,\d\ell_s \right)\right]\\
&= f^{-p}(x)\leq1,
\end{align*}
since $f\geq1$ by assumption. This shows that
\begin{align*}\E\|Q_t\|^2\leq \e^{K_p(f)t}\,\|f\|^{p}_\infty,\quad t\geq0.
\end{align*}
Combining this for $p=2$ with  Theorem \ref{BSMT} and denoting $K(f)=K_2(f)$, we obtain
$$\si_t^2:= \E \int_0^t |h'(s)|^2 \|Q_s\|^2\,\d s \le \|f\|_\infty^2 \int_0^t |h'(s)|^2\e^{K(f)s}\,\d s.$$
Therefore, repeating step (b) in the proof of Theorem \ref{T3.1} with $K(f)$ replacing $K_V$, we finish the proof of (2).
\end{proof}


\end{document}